\documentclass[11pt]{article}
\usepackage{enumerate}
\usepackage{bbm}
\usepackage{amsfonts}
\usepackage[T1]{fontenc}
\usepackage{latexsym,amssymb,amsmath,amsfonts,amsthm}
\usepackage{graphicx, color}
\usepackage{subfigure}
\usepackage{epsf}
\usepackage{ulem}
\usepackage{epstopdf}
\usepackage{amssymb}
\usepackage{mathrsfs}
\usepackage{diagbox}
\usepackage[ruled]{algorithm2e}
\usepackage{placeins}

\newcommand{\R}{{\mathbb R}}

\newcommand{\be}{\begin{eqnarray}}
\newcommand{\ben}{\begin{eqnarray*}}
\newcommand{\en}{\end{eqnarray}}
\newcommand{\enn}{\end{eqnarray*}}
\newcommand{\ba}{\backslash}
\newcommand{\pa}{\partial}

\newcommand{\ov}{\overline}

\newcommand{\Grad}{{\rm Grad\,}}

\newtheorem{theorem}{Theorem}[section]
\newtheorem{lemma}[theorem]{Lemma}

\newtheorem{definition}[theorem]{Definition}

\newtheorem{proposition}[theorem]{Proposition}
\newtheorem{assumption}[theorem]{Assumption}

\begin{document}
\title{\bf Inverse Obstacle Scattering from Multi-Frequency Near-Field Backscattering Data}
\author{Jialei Li\thanks{School of Mathematical Sciences, University of Chinese Academy of Sciences, Beijing 100049, China, and Academy of Mathematics and Systems Science,
Chinese Academy of Sciences, Beijing 100190, China. Email: lijialei21@mails.ucas.ac.cn},\and
Xiaodong Liu\thanks{Corresponding author. Academy of Mathematics and Systems Science,
Chinese Academy of Sciences, Beijing 100190, China. Email: xdliu@amt.ac.cn}
}
\date{}
\maketitle

\begin{abstract}
This paper addresses the inverse obstacle scattering problem of simultaneously reconstructing the obstacle geometry and boundary conditions from multi-frequency near-field backscattering data. We first establish rigorous high-frequency asymptotic expansions for the scattered near-field, leveraging pseudo-differential operators (PDOs) to characterize the interaction between wavefront propagation and obstacle boundaries, where the principal symbol of the PDO governs the leading-order behavior of the scattering field. Based on these asymptotic results, we prove a global uniqueness theorem for the simultaneous recovery of the obstacle shape and impedance boundary condition under convexity assumptions. Furthermore, we develop a three-stage numerical reconstruction framework: (1) qualitative shape reconstruction via the direct sampling method; (2) quantitative boundary refinement via shape optimization; and (3) decoupled reconstruction of the boundary condition.
A highlight of this algorithm is that all the three steps avoid computing the direct problem. Numerical experiments are presented to verify the robustness and efficiency of the proposed algorithm.

\vspace{.2in}
{\bf Keywords:} Inverse scattering; high-frequency asymptotics; near-field backscattering; direct sampling method; obstacle identification.

\vspace{.2in} {\bf AMS Subject Classifications:}
35P25, 45Q05, 78A46

\end{abstract}

\section{Introduction}

The inverse scattering problem has been a central topic in applied mathematics and engineering for decades, with applications spanning medical imaging, geophysical exploration, radar, sonar, and non-destructive testing. It concerns the reconstruction of the geometric and physical properties of an unknown obstacle from measurements of scattered waves. Among inverse scattering configurations, backscattering—where transmitters and receivers are co-located—stands out for its practical relevance, as such configurations are easier to implement and naturally arise in many sensing platforms. Nevertheless, backscattering data carry only a small fraction of the information available in full-aperture measurements. As a consequence, establishing rigorous theoretical guaranties for reconstruction is extremely challenging, and existing numerical schemes often rely on restrictive convexity assumptions or a priori knowledge of the boundary conditions of the obstacle \cite{ArensJiLiu2020,Christiansen2013,KressRun98,LiLiu2015,LiLiuWang2017,Shin2016}.

Recently, Rakesh and Uhlmann \cite{RakUhl} proved that backscattering data can identify radial media for angularly controlled potentials; however, the uniqueness for general media remains an open problem \cite{EskinRalston3, EskinRalston2, Lagergren, StefUhl, Uhlmann2001, Wang}. For obstacle scattering, the theoretical development is even more limited due to the strong nonlinearity arising from the unknown boundary and physical properties of the obstacle. These challenges leave both the uniqueness of reconstruction and the stable numerical recovery  as largely open questions.

The mathematical foundation for high-frequency obstacle scattering was established primarily by Majda \cite{Majda1976}, who, using the Lax  parametrix method, constructed an approximate solution to derive the asymptotic leading term of the far-field pattern under plane-wave incidence. In our previous work \cite{LiLiuShi2025}, we extended Majda's far-field asymptotic analysis and uniqueness results to impedance boundary conditions in two dimensions. All these theoretical guaranties, however, rely on the far-field approximation, which assumes the wave source is infinitely far from the obstacle.

In contrast, many real-world applications, such as medical ultrasound probes and radar imaging, operate in the near-field regime with point-source emitters. In this setting, the interaction between the location of the source point, the geometry of the obstacle, and its boundary condition becomes much more sophisticated. Unlike the far-field scenario, where the obstacle’s curvature is fundamentally decoupled from the source location, near-field wave propagation is closely tied to their relative positions.

In this paper, we bridge this theoretical gap by investigating the near-field, point-source inverse obstacle scattering problem. Let $D\subset \mathbb{R}^N$ ($N=2,3$) be a convex bounded obstacle, and denote by $u^i(\cdot;z)$ the incident field generated by a point source at $z\in \mathbb{R}^N\setminus \ov{D}$. The scattered field $u^s$ satisfies the following boundary value problem:
\be \label{eq-scatteringEQs}
\left\{
\begin{aligned}
\Delta u^s(x;k)+k^2 u^s(x; k) &= 0, && \text{in } \mathbb{R}^N\backslash \ov{D},\\
\mathcal{B}u^s &= -\mathcal{B}u^i, && \text{on } \partial D,\\
\lim_{r=\|x\|\to \infty} r^{\frac{N-1}{2}}\left|\frac{\partial u^s}{\partial r} - ik u^s\right| &= 0,
\end{aligned}
\right.
\en
where the boundary operator $\mathcal{B}$ takes one of the following:
\be \label{eq-BCs}
(1)\;\mathcal{B} u = u,\qquad
(2)\;\mathcal{B} u = \frac{\partial u}{\partial n},\qquad
(3)\;\mathcal{B} u = \frac{\partial u}{\partial n} +ik\gamma u,
\en
with $n$ denoting the exterior unit normal to $\partial D$ and $\gamma$ a strictly positive impedance function in $C^\infty(\partial D)$. 
The inverse problem is to recover both the boundary $\partial D$ and the boundary condition $\mathcal{B}$ from multi-frequency near-field backscattering measurements $\{u^s(x;x,k)\}$ collected at coincident source–receiver locations on a measurement curve $\Gamma_R:=\partial B_R$, where $B_R$ is a ball of radius $R$ containing $\ov{D}$. To address this problem,  we extend Majda's high-frequency asymptotic framework to the point-source near-field regime. We derive explicit asymptotic expansions of the scattered field for Dirichlet, Neumann, and Robin obstacles in both two and three dimensions, revealing precisely how the obstacle's curvature couples with the source location. These asymptotics ultimately yield a global uniqueness theorem for the simultaneous reconstruction of the obstacle’s boundary and its associated boundary condition.

Numerically, the stable recovery of the impedance function $\gamma(x)$ poses a significant challenge, as it heavily relies on a highly accurate approximation of the obstacle's shape. To mitigate the extreme sensitivity of impedance evaluation to boundary errors, we propose a shape-impedance decoupling strategy. This approach features an intermediate shape optimization phase (see Step 3 in Section \ref{sec-Numeric}) and ensures robust impedance reconstruction while preserving the computational efficiency of the direct sampling method.

The remainder of the paper is structured as follows: Section \ref{sec-InvP} presents our main asymptotic and uniqueness results; Section \ref{sec:preliminaries} compiles the analytical preliminaries, including asymptotics of the fundamental solution and pseudo-differential operator calculus required for the proofs; Section \ref{Proofs} provides the complete proofs of the main results; Section \ref{sec-Numeric} details the reconstruction algorithm and a new iterative correction method for shape refinement; and Section 6 presents numerical experiments to validate the proposed method.

\section{High-frequency expansions and their applications}\label{sec-InvP}
For any $z\in \R^N\backslash \ov{D}$, we define the illuminated side of $\partial D$ with respect to $z$ by
\ben
\partial D^+_z:=\{y\in\partial D: n(y)\cdot(y-z)<0 \}.
\enn
We further define the illuminated side with respect to two points $x, z \in \R^N\backslash \ov{D}$ by
\ben
\partial D^+_{x,z}:=\partial D^+_z \cap\partial D^+_x,
\enn
as illustrated in Figure \ref{fig:IlluSide}. The non-illuminated side with respect to $x$ and $z$ is $\partial D^-_{x,z}:= \partial D\backslash \partial D^+_{x,z}$.

\begin{figure}
    \centering
    \includegraphics[width=0.5\linewidth]{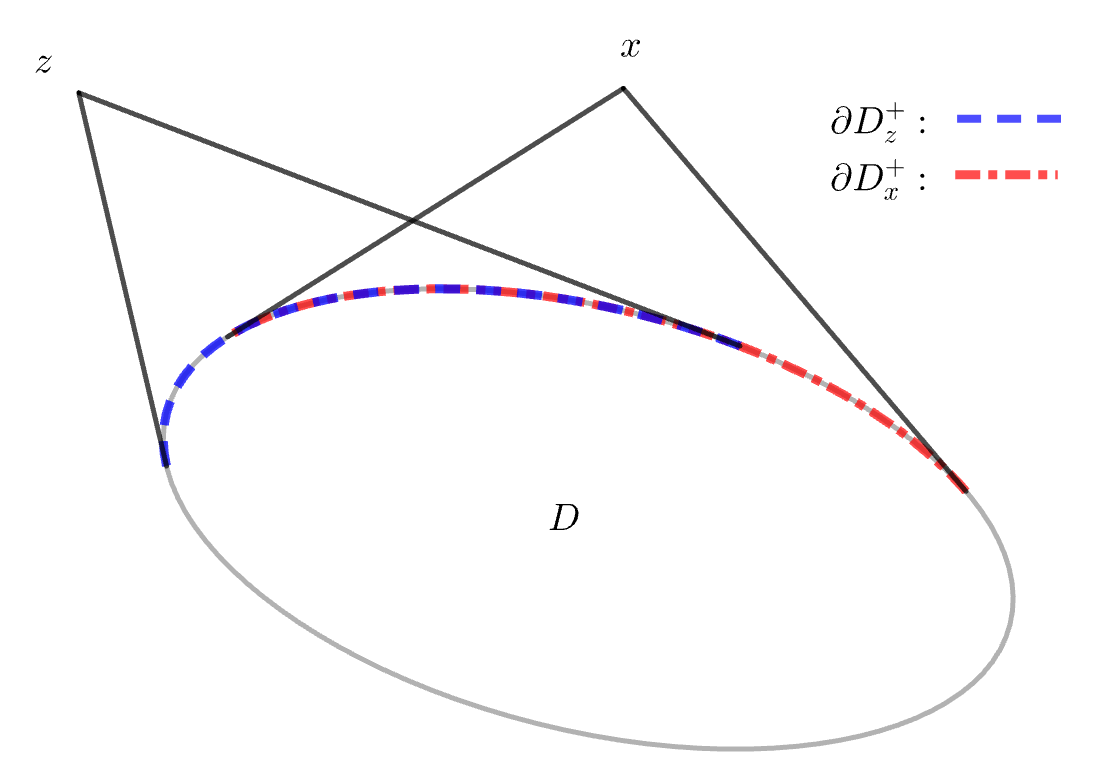}
    \caption{Illustration of illuminated side $\partial D^+_z, \partial D_x^+$.}
    \label{fig:IlluSide}
\end{figure}

The function
\ben
\psi(y):= \|x-y\|+\|y-z\|, \quad y\in \partial D
\enn
represents the total path length of the ray scattered at $y$, which plays a key role in physical optics. 
Recall that the surface gradient and surface Hessian of $\psi$ on $\partial D$ are defined, respectively, by 
\begin{equation*}
    \Grad \psi(y):= P_{\partial D} (y) \nabla \psi(y)\quad\mbox{and}\quad H_{\partial D}\psi(y) := \Grad\Grad^T \psi (y),
\end{equation*}
where $P_{\partial D}(y)$ is the projection operator at $y$. 
A point $y\in \partial D$ is called a stationary point of $\psi$ if $\Grad \psi(y)=0$;  it is referred to as a non-degenerate stationary point if  $\det H_{\partial D}\psi(y)\neq 0$. 
For any $z\in\R^N\ba\ov{D}$, we define 
\ben
\theta_z(y) := \frac{y-z}{\|y-z\|},\quad y\in \partial D.
\enn

Before stating our main results, we present a key geometric lemma and a critical assumption.
\begin{lemma}\label{lemma:UniqueYplus}
If $\partial D_{x,z}^+$ is not empty, there exists a unique stationary point $y^+\in \partial D_{x,z}^+$ of $\psi$, and this point $y^+$ is non-degenerate.
\end{lemma}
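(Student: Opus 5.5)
We work under the standing assumption that $D$ is convex with smooth boundary; for the non-degeneracy part we take $\partial D$ to be strictly convex, meaning its second fundamental form is positive definite. The plan is to read $\Grad\psi(y)=0$ as the law of reflection. We then obtain existence by minimizing $\psi$ over a suitable compact set, and obtain uniqueness and non-degeneracy from a positivity statement for the surface Hessian on the illuminated side.

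\textbf{Reformulating the stationarity condition.} We have $\nabla\psi(y)=\theta_z(y)-\theta_x(y)$, where $\theta_x(y)=(x-y)/\|x-y\|$. So $\Grad\psi(y)=0$ means $\theta_z(y)-\theta_x(y)$ is parallel to $n(y)$. Since $\theta_z$ and $\theta_x$ are unit vectors, this says the tangential components of $\theta_z(y)$ and of $-\theta_x(y)$ coincide. Two unit vectors with equal tangential parts have normal parts equal up to sign. On $\partial D^+_{x,z}$ we have $n\cdot\theta_z<0$ and $n\cdot(-\theta_x)<0$, so the normal parts agree as well, giving $\theta_z(y)=-\theta_x(y)$. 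This is impossible because $x,z\notin\ov D$. The other sign choice gives the reflection law $\theta_x(y)=\theta_z(y)-2(n\cdot\theta_z)n$.

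\textbf{Non-degeneracy and uniqueness.} At a stationary point $y$ we compute $H_{\partial D}\psi(y)$ using the standard formula for the Hessian of a function restricted to a hypersurface. It equals the tangential part of the ambient Hessian plus the normal derivative of $\psi$ times the second fundamental form. The ambient Hessian is
\[
\frac{I-\theta_z\theta_z^T}{\|y-z\|}+\frac{I-\theta_x\theta_x^T}{\|y-x\|},
\]
which is positive semidefinite. The normal derivative is $n\cdot(\theta_z-\theta_x)=2\,n\cdot\theta_z<0$ (the sign depends on orientation). With the outward normal and the convention that the second fundamental form $L$ is positive for convex $D$, the correction term is $-(n\cdot\nabla\psi)L=2|n\cdot\theta_z|L$, which is positive definite. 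Hence $H_{\partial D}\psi(y)$ is positive definite, so every stationary point in $\partial D^+_{x,z}$ is a non-degenerate strict local minimum.

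For uniqueness we extend the positivity beyond the stationary points. We show that $\psi$ restricted to the connected set $\partial D^+_{x,z}$ has only local minima as critical points and that there is no critical point on its boundary. We then use a mountain-pass or Morse argument: two strict local minima in a connected domain force a further critical point of saddle type, which is a contradiction. Here connectedness holds because $\partial D^+_{x,z}$ is the intersection of two geodesically convex caps of a convex surface.

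\textbf{Existence.} We minimize $\psi$ over the closure of $\partial D^+_{x,z}$. The main obstacle is to exclude a minimizer on the boundary, where $n\cdot\theta_z=0$ or $n\cdot\theta_x=0$. Suppose $n\cdot\theta_x=0$ at such a point. Then the tangential gradient there equals $\theta_z^{\tan}-\theta_x$, and we need to show it points outward, i.e.\ that $\psi$ decreases when moving into the region. The first attempt is the elementary geometric argument: the segment from $z$ toward the illuminated region shortens $\psi$. As an alternative, we use that the minimizer of $\|x-y\|+\|y-z\|$ over $\ov D$ lies on $\partial D$, with $\psi$ decreasing toward the side facing $x$ and $z$, and we verify that this minimizer lies in $\partial D^+_{x,z}$ by the strict inequalities.

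The positivity computation itself is routine; the delicate points are the boundary behaviour in the existence step and the topological uniqueness argument.
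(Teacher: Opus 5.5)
Your non-degeneracy computation is essentially the paper's: second variation along a geodesic, with normal term $-(n\cdot\nabla\psi)L\geq 0$. However, your uniqueness and existence steps have genuine gaps.

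\textbf{Uniqueness.} A mountain-pass or Morse argument needs compactness or control at the boundary, and $\partial D^+_{x,z}$ is an open piece of $\partial D$. A minimax path between two minima may be pushed onto the relative boundary, where $n\cdot\theta_x=0$ or $n\cdot\theta_z=0$, and there your Hessian positivity says nothing. So ``no critical point on the boundary'' is not the relevant condition. What you would need is that $\Grad\psi$ points out of $\partial D^+_{x,z}$ along its whole relative boundary. You do not prove this, and for $N=3$ it can fail. Let $L$ be the Weingarten map (your second fundamental form). At a point $y$ grazing for $x$, the outward conormal is $L(y-x)$, so the relevant sign is that of $\bigl(\frac{y-x}{\|y-x\|}+P_{\partial D}(y)\frac{y-z}{\|y-z\|}\bigr)\cdot L(y-x)$. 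This is negative when $y-x$ is not a principal direction and $z$ is placed, with $y$ still illuminated from $z$, so that $P_{\partial D}(y)\frac{y-z}{\|y-z\|}$ is close to $-L(y-x)/\|L(y-x)\|$. For $N=2$ the one-dimensional version of your argument does work, since between two strict local minima on an arc there is a local maximum. But you would still have to prove that $\partial D^+_{x,z}$ is an arc; the ``geodesically convex caps'' claim is unproved.

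\textbf{Existence.} This step is only sketched. Your second variant assumes that the minimizer of $\psi$ over $\ov D$ lies on $\partial D$ and is illuminated. That is false unless the segment $[x,z]$ misses $\ov D$: if $[x,z]$ meets $\ov D$, then $\psi$ attains its minimum $\|x-z\|$ on the chord $[x,z]\cap\ov D$, and no point of $\partial D$ on that chord is illuminated. Proving $[x,z]\cap\ov D=\emptyset$ is exactly where the hypothesis $\partial D^+_{x,z}\neq\emptyset$ must enter, and your proposal never uses that hypothesis.

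\textbf{The missing idea.} It is a global argument, not a topological one. Suppose $y\in\partial D^+_{x,z}$ is stationary. Then $\nabla\psi(y)$ is parallel to $n(y)$ with $n(y)\cdot\nabla\psi(y)<0$. So the open ellipsoid $E=\{w:\psi(w)<\psi(y)\}$ with foci $x,z$ is tangent to $\partial D$ at $y$ and lies in the open half-space $\{(w-y)\cdot n(y)>0\}$, while $\ov D$ lies in the complementary closed half-space. Hence $E\cap\ov D=\emptyset$: every illuminated stationary point is a global minimizer of $\psi$ on $\ov D$.

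Uniqueness is then immediate. Two such points $y\neq\tilde y$ must have equal values, since otherwise the lower one lies in the other's ellipsoid. Their midpoint then lies in $\ov D$ by convexity and in $E$ by strict convexity of the ellipsoid, a contradiction.

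Existence runs the argument in reverse. For an illuminated $y$ we have $(x-y)\cdot n(y)>0$ and $(z-y)\cdot n(y)>0$, so the triangle with vertices $x,y,z$ meets $\ov D$ only at $y$. Thus $[x,z]\cap\ov D=\emptyset$. Convexity of $\psi$ then shows that its minimum over $\ov D$ is attained at some $y_0\in\partial D$ with $\psi(y_0)>\|x-z\|$. Hence $x,z\in E_0=\{\psi<\psi(y_0)\}$ and $E_0\cap\ov D=\emptyset$. The tangent plane of $\partial D$ at $y_0$ separates $E_0$ from $\ov D$, so $x,z$ lie strictly on its outer side, i.e.\ $y_0\in\partial D^+_{x,z}$.

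\textbf{Two smaller points.}
\begin{enumerate}
\item You do not need a positive definite second fundamental form. On $\partial D^+_{x,z}$ the vectors $\theta_x$ and $\theta_z$ are transversal to $T_y\partial D$, so $\|v\|^2-(v\cdot\theta_x)^2>0$ for every tangent $v\neq 0$. The tangential part of $\nabla^2\psi$ is therefore already positive definite, and mere convexity ($L\geq 0$) suffices.
\item In your reflection-law paragraph, $\Grad\psi=0$ means that $\theta_z$ and $\theta_x$ (in your convention) have equal tangential parts, not $\theta_z$ and $-\theta_x$. Also, $\theta_z=-\theta_x$ is not impossible; it is exactly the case $x=z$. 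The sign of the normal parts is simply forced by $n\cdot\theta_z<0<n\cdot\theta_x$.
\end{enumerate}
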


\begin{assumption}\label{assumption:geometric}
For the  convex obstacle $D$ and points $x, z\in \mathbb{R}^N\setminus\ov{D}$, all stationary points of $\psi$ are non-degenerate in the non-illuminated side $\partial D^-_{x,z}$.
\end{assumption}

This assumption fails only if a portion of $\partial D$ coincides with an ellipse (or ellipsoid) with foci at $x$ and $z$. For bounded obstacles with analytic boundaries (curves or surfaces), Assumption \ref{assumption:geometric} is always satisfied.

Our first main result is the high-frequency asymptotic expansion of the scattered field for the general near-field configuration:

\begin{theorem}\label{thm:general}
Under Assumption \ref{assumption:geometric}, provided that $\partial D_{x,z}^+$ is nonempty, the scattered field $u^s(x;z,k)$ admits the following asymptotic expansion as $k\to \infty$:
\be \label{eq-Asym-u}
u^s(x;z,k)= A_N(x,z,k)\theta_x^+\cdot n^+ \frac{\gamma^+ +\theta_z^+\cdot n^+}{\gamma^+ -\theta_z^+\cdot n^+}e^{ik\psi^+} + O\left(k^{\frac{N-5}{2}}\right),
\en
where 
\begin{itemize}
    \item $y^+$ is the unique stationary point of $\psi$ in $\partial D^+_{x,z}$;
    \item $f^+:=f(y^+)$ for  $f=\gamma,\theta_z,\theta_x,n$ and $\psi$;
    \item the dimensional factor $A_N(x,z,k)$ is defined as
    \ben
    A_N(x,z,k):=\frac{e^{\frac{3-N}{4}\pi i}}{2k} \left( \frac{k}{2\pi\|x-y^+\|\|y^+ -z\||\det H_{\partial D}\psi(y^+)|} \right)^{\frac{N-1}{2}}.
    \enn
\end{itemize}
\end{theorem}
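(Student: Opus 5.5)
We represent the scattered field by Green's formula and evaluate the resulting boundary integral by stationary phase; the boundary data is supplied by a high-frequency approximation of the Dirichlet-to-Neumann map as a pseudo-differential operator (PDO). Let $\Phi_k$ denote the outgoing fundamental solution and $u=u^i+u^s$ the total field. For $x\notin\ov{D}$,
\[
u^s(x;z,k)=\int_{\partial D}\Big(u^s(y)\frac{\partial \Phi_k(x,y)}{\partial n(y)}-\frac{\partial u^s}{\partial n}(y)\Phi_k(x,y)\Big)\,ds(y).
\]
The unknown Cauchy data on $\partial D$ is expressed through the exterior DtN map $\Lambda_k$, so that $\partial_n u^s=\Lambda_k u^s$. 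The boundary condition $\mathcal{B}u^s=-\mathcal{B}u^i$ then becomes a pseudo-differential equation on $\partial D$.

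\textbf{Step 1 (DtN symbol).} We use the Majda/Lax-type parametrix: microlocally on the illuminated region, $\Lambda_k$ is a semiclassical PDO with principal symbol $ik\sqrt{1-|\xi|^2}$ on the hyperbolic region $|\xi|<1$. This is the content of the PDO calculus in Section \ref{sec:preliminaries}. The incident data $u^i=\Phi_k(\cdot,z)$ is oscillatory with phase $k\|y-z\|$, whose tangential frequency is $\xi=P_{\partial D}\theta_z$, and on $\partial D^+_z$ we have $\sqrt{1-|\xi|^2}=|\theta_z\cdot n|=-\theta_z\cdot n$. Solving the boundary equation to leading order gives $u^s|_{\partial D}\approx R(y)\,u^i(y)$ and $\partial_n u^s\approx -ik(\theta_z\cdot n)R\,u^i$ on $\partial D^+_z$. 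The reflection coefficient is
\[
R=\frac{\gamma+\theta_z\cdot n}{\gamma-\theta_z\cdot n}.
\]
It reduces to $R=-1$ for Dirichlet ($\gamma\to\infty$) and $R=1$ for Neumann ($\gamma=0$), with relative error $O(k^{-1})$. On the shadow region the data is $O(k^{-\infty})$ away from a neighbourhood of the shadow boundary, where smooth cutoffs are used.

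\textbf{Step 2 (stationary phase).} Inserting the asymptotics of $\Phi_k(x,y)$, $\Phi_k(y,z)$ and $\partial_{n(y)}\Phi_k(x,y)$, which carry factors $\|x-y\|^{-(N-1)/2}$, $k^{(N-3)/2}$ and $ik\,\theta_x\cdot n$ in the normal derivative, the integrand becomes $a(y,k)e^{ik\psi(y)}$. The amplitude is
\[
a\sim c_N k^{N-2}\,\frac{ik\,\bigl(-\theta_x\cdot n+\theta_z\cdot n\bigr)R}{\|x-y\|^{\frac{N-1}{2}}\|y-z\|^{\frac{N-1}{2}}}.
\]
At a stationary point $y^+$, the vanishing of $\Grad\psi$ gives $P(\theta_x+\theta_z)=0$ up to sign conventions, so the normal components combine to $2\,\theta_x\cdot n$. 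By Lemma \ref{lemma:UniqueYplus}, $y^+$ is the unique non-degenerate stationary point in $\partial D^+_{x,z}$. Stationary phase in dimension $N-1$ yields the factor $(2\pi/k)^{(N-1)/2}|\det H_{\partial D}\psi(y^+)|^{-1/2}e^{i\pi\,\mathrm{sgn}H/4}$. For $y^+$ a minimum on the convex illuminated side, the signature combines with the Hankel phases $e^{-i\pi/4}$ to give $e^{\frac{3-N}{4}\pi i}$. Collecting constants should reproduce $A_N$.

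\textbf{Step 3 (error terms).} The remaining stationary points lie in $\partial D^-_{x,z}$, and by Assumption \ref{assumption:geometric} they are non-degenerate. They fall into two cases.
\begin{itemize}
\item Points in the shadow of $z$: the density there is $O(k^{-\infty})$ away from the grazing set.
\item Points in $\partial D_z^+\setminus\partial D_x^+$: the stationary points of $\psi$ on this set are not specular, and their contributions must either cancel or be lower order.
\end{itemize}
Lower-order amplitude corrections from the PDO symbol and from the Hankel expansions contribute one power of $k^{-1}$ relative to the leading term $O(k^{(N-3)/2})$, giving the stated $O(k^{(N-5)/2})$.

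\textbf{Main obstacle.} The hardest step is the contribution near the shadow boundary, where the DtN symbol is not smooth at $|\xi|=1$ (the glancing set). Bounding this contribution by $O(k^{(N-5)/2})$ requires either Melrose–Taylor type estimates or a cutoff argument. That argument would exploit the fact that $y^+$ is strictly illuminated, so $|\theta_z\cdot n|$ is bounded below near $y^+$. Away from $y^+$, integration by parts in $\psi$ then gains arbitrary powers of $k$ provided the density is controlled in a suitable semiclassical Sobolev norm. I would try first to establish this uniform bound on $u^s|_{\partial D}$ using the known polynomial resolvent bounds for convex obstacles.
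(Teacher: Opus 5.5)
Your plan is the reciprocal mirror image of the paper's argument. You apply the boundary operator on the source side, to data oscillating like $e^{ik\|y-z\|}$, and then run stationary phase in Green's formula. The paper instead, in \eqref{eq-doAdjoint}, moves $\mathcal{F}_1,\mathcal{F}_2$ by duality onto the explicit observation-side phase $e^{-ik(\|x-y\|+t)}$. There Lemma~\ref{lem:pdo-action} acts on a known function, $\tau^{\dag}=|\theta_x\cdot n|$, and the resulting coefficient multiplies the explicit, smooth $\mathcal{B}u^i$; that coefficient is identically zero on the $x$-shadow side at principal order. Your route is viable in principle, but the step you flag as the main obstacle is left open, and the remedy you propose to try first cannot close it. Integrating by parts in $\psi$ away from $y^+$ requires the trace to have the form $u^s|_{\partial D}=e^{ik\|y-z\|}v$ with $v$ varying on scales much coarser than $k^{-1}$. (In the Fock layer each derivative of $v$ costs about $k^{1/3}$, which is still fine.) Equivalently, the semiclassical wavefront set of the trace near the $z$-shadow boundary must lie in $\{\xi=P_{\partial D}\theta_z\}$. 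A semiclassical Sobolev bound from polynomial resolvent estimates controls only the size of the trace. It cannot rule out components at tangential frequency $-P_{\partial D}\theta_x$, which would resonate with $\Phi_k(x,\cdot)$ at non-specular points. The missing ingredient is exactly this wavefront confinement, which the paper imports as \eqref{eq-WFofw0} from the FIO structure of $\mathcal{F}_1,\mathcal{F}_2$ (Majda, Taylor; Melrose--Taylor at glancing). With it, a neighbourhood of the $z$-grazing set contributes $O(k^{-\infty})$, because no grazing point is specular. The paper needs the same input, but only on the $x$-grazing neighbourhood $\Gamma_G$; the $z$-shadow boundary is harmless there because the paper only ever sees $\mathcal{B}u^i$ on it.

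Second, your shadow-side claim is false for the representation you use. Apply Proposition~\ref{prop-F1F2} and Lemma~\ref{lem:pdo-action} directly to $\mathcal{B}u^i$, which also avoids inverting $\Lambda_k+ik\gamma$. Away from the $z$-grazing set this gives $u^s|_{\partial D}\approx-\frac{\gamma+\theta_z\cdot n}{\gamma+|\theta_z\cdot n|}\,u^i$, so $u^s\approx-u^i$ in the shadow (exactly so for Dirichlet). It is the \emph{total} field whose Cauchy data are negligible there, and you can fix this in one of two ways. Either use the total-field representation, via $\int_{\partial D}\bigl(u^i\,\partial_n\Phi_k(x,\cdot)-\partial_n u^i\,\Phi_k(x,\cdot)\bigr)\,ds=0$. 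Or observe that the shadow integrand $-ik(\theta_x\cdot n-\theta_z\cdot n)\,u^i\,\Phi_k(x,\cdot)$ vanishes at specular points, so the non-degenerate doubly shadowed stationary points of Assumption~\ref{assumption:geometric} contribute $O(k^{\frac{N-5}{2}})$. Your second bullet is vacuous rather than a case to be handled. Indeed, $P_{\partial D}(\theta_x+\theta_z)=0$ and $|\theta_x|=|\theta_z|=1$ give $(\theta_x\cdot n)^2=(\theta_z\cdot n)^2$. The case $\theta_x\cdot n=-\theta_z\cdot n$ would force $\theta_x=-\theta_z$, i.e.\ $y\in[x,z]$, which the proof of Lemma~\ref{lem:geometric} excludes. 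So every stationary point is either $y^+$ or doubly shadowed. Finally, several signs and powers need correcting:
\begin{itemize}
\item $R=-\frac{\gamma+\theta_z\cdot n}{\gamma-\theta_z\cdot n}$; your own limits $R=-1$ and $R=1$ require the minus sign.
\item The lit-side integrand is $ik(\theta_x\cdot n+\theta_z\cdot n)R\,u^i\,\Phi_k(x,\cdot)$; your $-\theta_x\cdot n+\theta_z\cdot n$ vanishes at $y^+$.
\item The power in front is $C_N(k)^2\sim k^{N-3}$, not $k^{N-2}$.
\end{itemize}
With these corrections the constants give $e^{\frac{3-N}{4}\pi i}$ and the sign in \eqref{eq-Asym-u}. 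Stationary phase yields $|\det H_{\partial D}\psi(y^+)|^{-1/2}$ in every dimension. This matches $A_N$ for $N=2$, but for $N=3$ the displayed $A_N$ carries the power $-1$, which fails a dimension check, so do not force agreement there.
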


For the backscattering case ($x=z$)—the focus of many practical applications—we obtain the following simplified result:
\begin{theorem}\label{thm:backscattering}
Under Assumption \ref{assumption:geometric}, the backscattering scattered field admits the following simplified asymptotic expansion:
\be \label{eq-Asym-u0-backsca}
u^s(x;x,k) = -A_N(x,x,k) \frac{\gamma^+-1}{\gamma^++1}e^{2ik\|x-y^+\|}+ O\left(k^{\frac{N-5}{2}}\right),
\en
where $y^+$ denotes the closest point to $x$ on $\partial D$.
\end{theorem}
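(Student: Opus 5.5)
Theorem \ref{thm:backscattering} will follow by specializing Theorem \ref{thm:general} to $z=x$, so the plan is to check the hypotheses and then simplify the leading term. \emph{Nonemptiness of the illuminated side.} For $x\in\R^N\ba\ov{D}$ with $D$ convex, $\partial D^+_{x,x}=\partial D^+_x=\{y\in\partial D: n(y)\cdot(y-x)<0\}$. Let $y_0$ be a point of $\partial D$ closest to $x$. The first-order optimality condition for $\min_{y\in\partial D}\|x-y\|$ gives $x-y_0=\|x-y_0\|\,n(y_0)$, since $x$ lies outside and the outward normal points toward $x$. Hence $n(y_0)\cdot(y_0-x)=-\|x-y_0\|<0$, so $y_0\in\partial D_x^+$ and the illuminated side is nonempty. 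Theorem \ref{thm:general} therefore applies under Assumption \ref{assumption:geometric} with $z=x$.

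\emph{Identification of $y^+$.} When $z=x$ we have $\psi(y)=2\|x-y\|$ and $\Grad\psi(y)=2P_{\partial D}(y)\,\frac{y-x}{\|y-x\|}$. This vanishes exactly when $y-x$ is parallel to $n(y)$, which includes the closest point $y_0$. By Lemma \ref{lemma:UniqueYplus}, the stationary point in $\partial D^+_{x,x}$ is unique, so $y^+=y_0$. Uniqueness of the closest point itself also follows, because every closest point is such a stationary point in the illuminated side. Consequently $\psi^+=2\|x-y^+\|$.

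\emph{Simplifying the amplitude.} At $y^+$ we have $\theta_x^+=\theta_z^+=\frac{y^+-x}{\|y^+-x\|}=-n^+$, so $\theta_x^+\cdot n^+=\theta_z^+\cdot n^+=-1$. Substituting into \eqref{eq-Asym-u}, the amplitude becomes
$$A_N(x,x,k)\,(-1)\,\frac{\gamma^+-1}{\gamma^++1},$$
and the error term $O(k^{\frac{N-5}{2}})$ is unchanged. This is exactly \eqref{eq-Asym-u0-backsca}. The Dirichlet and Neumann cases correspond to the formal limits $\gamma^+\to\infty$ and $\gamma^+=0$ in Theorem \ref{thm:general}. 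They are carried through unchanged, giving $-A_N$ and $+A_N$ times the exponential respectively.

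\emph{Remaining checks.} The only step needing care is that $A_N(x,x,k)$ is well defined, i.e.\ $\det H_{\partial D}\psi(y^+)\neq0$, and this is guaranteed by the non-degeneracy statement of Lemma \ref{lemma:UniqueYplus}. One could also verify it directly: $H_{\partial D}\psi(y^+)$ equals $2\big(\frac{1}{\|x-y^+\|}I-\mathcal{K}\big)$ up to sign conventions, where $\mathcal{K}$ is the shape operator. This matrix is positive definite because the ball of radius $\|x-y^+\|$ centred at $x$ touches $\partial D$ from outside a convex set.
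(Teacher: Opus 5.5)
Your proposal is correct and follows the same route as the paper. You specialize Theorem~\ref{thm:general} to $z=x$, identify $y^+$ as the closest point to $x$ using the uniqueness in Lemma~\ref{lemma:UniqueYplus}, and substitute $\theta_x^+\cdot n^+=\theta_z^+\cdot n^+=-1$ and $\psi^+=2\|x-y^+\|$. You are also slightly more explicit than the paper, which leaves implicit both that $\partial D^+_{x,x}$ is nonempty (needed to invoke Theorem~\ref{thm:general}) and why the closest point is the illuminated stationary point.
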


These results provide explicit high-frequency behavior of the scattered field, which characterizes the geometric dependence of the scattered field on the obstacle's shape and the relative position of the source and observation points. The backscattering formula is particularly useful for inverse problems, as it directly relates the measured field to the distance from the source to the obstacle's boundary.

Notably, the leading term of the above asymptotic expansion vanishes when $\gamma(y^+)=1$. This corresponds to the case where the incident field locally satisfies the boundary condition at $y^+$ in the high-frequency limit. We therefore introduce an additional assumption on the boundary condition:
\begin{assumption}\label{assumption:boundcondi}
    The impedance function satisfies $\gamma= 1$ at at most finitely many points on $\partial D$.
\end{assumption}

As a direct application of the above high-frequency asymptotic analysis of the backscattering data, we state the following global uniqueness result for the inverse obstacle scattering problem.

\begin{theorem} \label{thm:uniqueness}
Under Assumption \ref{assumption:geometric} and \ref{assumption:boundcondi}, the boundary $\partial D$ and the impedance function $\gamma$ are uniquely determined by the near-field backscattering measurements
\ben
\{u^s(x;x,k): x\in \Gamma_R, k\in K\},
\enn
where $K$ can be either an interval $(k_{min},k_{max})$ or a set $\{k_j=k_0+j\delta_k:\,j = 0, 1, \cdots\}$ with $\delta_k<\frac{\pi}{4R}$.
\end{theorem}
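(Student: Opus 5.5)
The plan is to reduce the uniqueness question to recovering, for each $x\in\Gamma_R$, the distance $d(x):=\operatorname{dist}(x,\partial D)=\|x-y^+(x)\|$ and the value $\gamma(y^+(x))$. Both can be read off from the leading term in Theorem \ref{thm:backscattering}.

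\textbf{Step 1: extend the data to all large $k$.} For fixed $x\in\Gamma_R$, the map $k\mapsto u^s(x;x,k)$ is analytic in $k>0$. This follows from the analytic dependence of the boundary integral equation solution on $k$; for the impedance case it uses the unique solvability for $\gamma>0$. So if $K$ is an interval, the data determine $u^s(x;x,k)$ for all $k>0$, in particular as $k\to\infty$. The lattice case $K=\{k_0+j\delta_k\}$ needs a different route, since we cannot continue analytically and must work directly with the samples. The leading term behaves like $c\,k^{(N-3)/2}e^{2ik d(x)}$ with $d(x)\le 2R$. Hence the phase increment between consecutive samples is $2\delta_k d(x)<4R\cdot\frac{\pi}{4R}=\pi$. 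This is the aliasing (Nyquist-type) condition that lets us identify the frequency $2d(x)$ modulo nothing, i.e.\ uniquely within $(0,4R)$, from the sequence of samples.

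\textbf{Step 2: recover $d(x)$ and $\gamma(y^+)$.} Suppose two obstacles $(D_1,\gamma_1)$ and $(D_2,\gamma_2)$ produce the same data. Fix $x$ at which $\gamma_1(y_1^+)\neq1$; by Assumption \ref{assumption:boundcondi} this holds off a small set. Multiply $u^s$ by $k^{(3-N)/2}$. By Theorem \ref{thm:backscattering},
\begin{equation*}
k^{\frac{3-N}{2}}u^s = a_1 e^{2ikd_1} + O(k^{-1}) = a_2e^{2ikd_2}+O(k^{-1}),
\end{equation*}
with $a_j\neq0$ and $k$ in the common set. Here
\begin{equation*}
a_j=-\frac{e^{\frac{3-N}{4}\pi i}}{2}\Bigl(2\pi d_j^2|\det H\psi_j|\Bigr)^{-\frac{N-1}{2}}\frac{\gamma_j^+-1}{\gamma_j^++1}.
\end{equation*}
Take absolute values and average over $k$ via $\lim \frac1M\sum_j$ or $\lim\frac1T\int$. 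This forces $|a_1|=|a_2|$. Next consider $e^{-2ik d_1}k^{\frac{3-N}{2}}u^s$ and average; this gives $a_1$ if $d_2= d_1$ and $0$ if $d_2\ne d_1$. For the lattice case we additionally need $2(d_1-d_2)\delta_k\notin 2\pi\mathbb Z$, which the bound $\delta_k<\pi/(4R)$ guarantees. Hence $d_1(x)=d_2(x)$ and $a_1=a_2$.

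\textbf{Step 3: conclude.} Continuity extends $d_1=d_2$ to all of $\Gamma_R$. The nonvanishing set is dense, because $y^+(x)$ sweeps an open set of $\partial D$ as $x$ varies and the exceptional $\gamma=1$ set is finite. Since $D_j$ is convex, $D_j=\bigcap_{x\in\Gamma_R}\{y:\|y-x\|\ge d_j(x)\}\cap B_R$. Equivalently, $\partial D_j$ is the envelope of the spheres $\partial B_{d_j(x)}(x)$, and it is recovered from the distance function together with the map $x\mapsto y^+(x)=x-d(x)\nu(x)$, where $\nu$ is determined by $\nabla d$. So $D_1=D_2$. Then $\det H\psi$ and $d$ agree, so $a_1=a_2$ gives $\frac{\gamma_1^+-1}{\gamma_1^++1}$ equal for both. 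Since $t\mapsto\frac{t-1}{t+1}$ is injective on $t>0$, we get $\gamma_1=\gamma_2$ at every $y^+$, and by continuity on all of $\partial D$. For the Dirichlet and Neumann cases (formally $\gamma=\infty$ or $0$), the ratio equals $1$ or $-1$, so the boundary condition type is also distinguished. Here we must check that each $y\in\partial D$ is a nearest point for some $x\in\Gamma_R$, namely along the outward normal ray, which holds by convexity.

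The main obstacle is Step 2 for the discrete set $K$. The $O(k^{-1})$ remainder must be small uniformly so that discrete averages converge, and the aliasing argument must be made precise. I would handle it by writing the difference of the two expansions as $a_1e^{2ikd_1}-a_2e^{2ikd_2}=O(k^{-1})$ along $k_j$. Multiplying by $e^{-2ik_jd_1}$ and taking Cesàro means, a geometric sum shows that $\frac1M\sum_j e^{2ik_j(d_2-d_1)}\to0$ whenever $2\delta_k(d_2-d_1)\notin2\pi\mathbb Z$. This is automatic since $|d_2-d_1|<2R$ and $\delta_k<\pi/(4R)$.
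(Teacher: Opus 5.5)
Your proposal is correct. Its architecture matches the paper's: read $d(x)=\|x-y^+(x)\|$ off the phase of the leading term, recover $\overline{D}=B_R\setminus\bigcup_x B_x$ from $d$ by convexity, then read $q=\frac{\gamma^+-1}{\gamma^++1}$ off the amplitude. The key step for lattice data, however, uses a different mechanism.

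\textbf{How the key step differs.} The paper characterizes $d(x)$ from a single data set. It is the unique $t\in(0,2R)$ with $f_{x,k_j}(t)=\operatorname{Im}\{2k_j^{(3-N)/2}e^{\frac{N-3}{4}\pi i-2ik_jt}u^s(x;x,k_j)\}\to0$, and the paper proves this by a trigonometric argument (if $\sin(2k_js)\to0$ then $\cos(\delta_ks)=0$). You instead compare two obstacles through Ces\`aro means and the geometric-sum fact $\frac1M\sum_{j<M}e^{ij\omega}\to0$ for $\omega\notin2\pi\mathbb Z$.

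\textbf{What your route buys.} Because you keep the full complex exponential rather than its imaginary part, you only need $2\delta_k(d_1-d_2)\notin2\pi\mathbb Z$. So your argument actually works for $\delta_k\le\pi/(2R)$. The paper's passage to a real sine, whose zeros are spaced by $\pi$, is what costs the factor $2$ in $\delta_k<\pi/(4R)$. In addition, replace $d_1$ by a trial distance $t=\|z-x\|$. In two dimensions the modulus of your Ces\`aro mean is then exactly $\frac1n I_x(z)$ from Section 5, so your route also justifies the sampling step of the algorithm. You also make explicit the analytic continuation in $k$, which is the unstated reason the paper can reduce an interval $K$ to the lattice case.

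\textbf{What the paper supplies that you omit.} The paper proves the convex reconstruction formula: project $z\notin\overline{D}$ onto $\partial D$ and continue the normal ray to $\Gamma_R$. You only assert this formula.

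\textbf{One small correction.} Density of the admissible $x$ does not follow from $y^+(x)$ ``sweeping an open set''. What you need is that $y^+(x)=y_0$ only at the single point where the outward normal ray at $y_0$ meets $\Gamma_R$. Hence the exceptional set of $x$ is finite.
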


\section{Preliminaries}\label{sec:preliminaries}

\subsection{Fundamental solutions}
In the time domain, the distributional fundamental solution of the wave equation $\partial^2_t w - \Delta w=0 $ in $\R^N$ is given by 
\be \label{eq-fundaSolution-timedomain}
\hat{G}(y,t;z) =\left\{
\begin{aligned}
&\frac{\delta(t-\|y-z\|)}{4\pi\|y-z\|}, &&N=3,\\
&\frac{H(t-\|y-z\|)}{2\pi \sqrt{ t^2-\|y-z\|^2 }}, &&N=2,
\end{aligned}
\right.
\en
where $\delta$ is the Dirac delta distribution and $H$ is the Heaviside step function. The frequency domain fundamental solution $G(y,z,k)=u^i(y;z,k)$ is the inverse Fourier transform of the time domain fundamental solution, i.e.,
\ben
G(y,z,k) := \int_\mathbb{R} e^{ikt} \hat{G}(y,t;z) \, dt=  \left\{
\begin{aligned}
& \frac{e^{ik\|y-z\|}}{4\pi\|y-z\|}, &N=3,\\
& \frac{i}{4} H_0^{(1)}(k\|y-z\|), &N=2.
\end{aligned}
\right.
\enn
Here $H_0^{(1)}$ denotes the Hankel function of the first kind and order zero.
The fundamental solution and its gradient satisfy the high-frequency asymptotic expansions \cite{CK}
\be \label{eq-Asym-Funda2}
\begin{aligned}
G(y,z,k) &= C_N(k)\frac{e^{ik\|y-z\|}}{\|y-z\|^{\frac{N-1}{2}}}\left( 1+O\left( \frac{1}{k} \right) \right), &k\to\infty,\\
\nabla_y G(y,z,k) &= ik \theta_z(y) C_N(k)\frac{e^{ik\|y-z\|}}{\|y-z\|^{\frac{N-1}{2}}}\left( 1+O\left( \frac{1}{k} \right) \right), &k\to\infty,
\end{aligned}
\en
where $C_N(k) = \frac{i}{2k}\left( \frac{k}{2\pi i} \right)^{(N-1)/2}$.

\subsection{Pseudo-differential operators}
Pseudo-differential operators (PDOs) are used to characterize the operator properties of "wavefront propagation and boundary interaction" in high-frequency scattering. Their principal symbols determine the behavior of the leading term of the scattered field, serving as a core tool for deriving asymptotic expansions.
Now we recall the definition of pseudo-differential operators (PDOs) on $\R^N$.
\begin{definition}
A function $a\in C^\infty(\R^N\times \R^N)$ is called a symbol in $S^m(\R^N)$ if, for all multi-index $\alpha, \beta$, there exists a constant  $C_{\alpha, \beta}$ such that
\ben
|\partial_\xi^\alpha \partial_x^\beta a(x, \xi)|\leq C_{\alpha, \beta}(1+\|\xi\|)^{m-|\alpha|}, \quad x,\xi\in \R^N,\quad m\in \R
\enn
It is called a classical symbol of order $m$, i.e., $a\in S^m_{cl}(\R^N)$, if  there exists a sequence of homogeneous symbols $a_{m-j}(x, \xi)$ of order $m-j$ with respect to $\xi$, i.e., 
\begin{equation*}
    a_{m-j}(x, \lambda \xi) = \lambda^{m-j} a_{m-j}(x, \xi), \quad\lambda \in \R, \quad j=0,1,\cdots
\end{equation*}
such that
$$
a - \sum_{j=0}^J a_{m-j}\in S^{m-J-1}(\R^N).
$$
\end{definition}

In particular, for a classical symbol $a$, we have
\be\label{eq:symbolerror}
|a(x, \xi)-a_m(x,\xi)|\leq C_{0,0} \|\xi\|^{m-1}, \quad \|\xi\|\to\infty
\en
for some constant $C_{0,0}$.

Each symbol corresponds to a unique pseudo-differential operator (PDO).
\begin{definition}
An operator $P$ is a classical PDO of order $m$ with symbol $a\in S^m_{cl}(\R^N)$ if
$$
Pu =  \int_{\R^N} e^{ix\cdot \xi} a(x, \xi)\hat{u}(\xi){d\xi},\quad u\in C_0^\infty(\R^N),
$$
where $\hat{u}$ is the Fourier transform of $u$, i.e.,
\begin{equation*}
    \hat{u}(\xi):=\frac{1}{(2\pi)^N}\int_{\R^N} e^{-i x\cdot\xi} u(x)dx.
\end{equation*}
The space of all classical PDOs of order m is denoted by $OPS^m_{cl}(\R^N)$.
\end{definition}

The definition of PDOs on $\R^N$ can be naturally extended to PDOs on a smooth manifold.
\begin{definition}[PDO on manifold]
    Let  $M$ be a smooth manifold of dimension $N$. We call an operator $P: C_0^\infty(M) \mapsto \mathcal{D}'(M)$ a classical PDO of order $m$ in $M$ (denoted $P\in OPS_{cl}^m(M)$) if its localized action is equivalent to the action of standard PDO on $\R^N$. Specifically, there exists an open cover $\{\Omega_j\}$ of $M$ and diffeomorphisms $\{F_j: \Omega_j \to \mathcal{O}_j \subset \R^N\}$ such that
\begin{equation*}
    (Pu)(y) = [Q_{ij}(\chi_{\Omega_j}(\cdot)u(F_j^{-1}(\cdot)))](F_i(y)), \quad u\in C_0^\infty(M), \quad y\in \Omega_i
\end{equation*}
for some $Q_{ij}\in OPS^m_{cl}(\R^N)$. Here $\chi_{\Omega_j}$ is the character function of $\Omega_j$.
\end{definition}


More generally, we consider the Fourier integral operator $P: C_0^\infty(\R^N) \mapsto \mathcal{D}'(\R^N)$ defined by
$$
Pu = \int_{\R^N} e^{i \phi(x, \xi)} a(x, \xi)\hat{u}(\xi)d\xi,
$$
where $\phi(x, \xi)$ is a phase function. In particular, we call $P$ a PDO of order $m$ in $\Omega$ if $\phi = x\cdot \xi$ in a domain $\Omega \subset \{(x, \xi)\in \R^N\times \R^N\}$.

\subsection{Time-frequency correspondence}

For the time domain scattering problem, we have the following boundary value problem:
\be \label{eq-scatteringEQs-timedomain}
\begin{aligned}
\partial^2_t w - \Delta w&=0, &\mbox{in } \R^N\backslash \ov{D},\\
\hat{\mathcal{B}} w &= -\hat{\mathcal{B}} w^i, & \mbox{on } \partial D,\\
w(x,t)&= 0, & t\ll 0,
\end{aligned}
\en
where $w^i = \hat{G}(x,t;z)$.
The time-domain boundary condition $\hat{\mathcal{B}}$ takes one of the following:
\be \label{eq-BC-timedomain}
(1) \hat{\mathcal{B}} w = w, \qquad
(2) \hat{\mathcal{B}} w = \frac{\partial w}{\partial n}, \qquad
(3) \hat{\mathcal{B}} w = \frac{\partial w}{\partial n} - \gamma \frac{\partial w}{\partial t}.
\en
It is straightforward to verify that $u(x, k):=\int_\mathbb{R} e^{ikt} w(x,t)dt$ solves the frequency scattering problem with the corresponding boundary condition and incident field $u^i = G(x,z,k)$.



\subsection{Proof of Lemma \ref{lemma:UniqueYplus}}
For convenience, we restate the lemma \ref{lemma:UniqueYplus} as follows.
\begin{lemma}\label{lem:geometric}
If $\partial D_{x,z}^+$ is not empty, there exists a unique stationary point $y^+\in \partial D_{x,z}^+$ of $\psi$, and this point $y^+$ is non-degenerate.
\end{lemma}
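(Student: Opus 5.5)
Throughout, $D$ is convex and $x,z\notin\ov{D}$; we use only these two facts together with elementary differential geometry of $\partial D$.

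\textbf{Stationary points as reflection points.} At $y\in\partial D$ we have $\nabla\psi(y)=\theta_z(y)-\theta_x(y)$ (note $\nabla_y\|x-y\|=-\theta_x(y)$), so $\Grad\psi(y)=0$ exactly when the tangential parts of $\theta_z(y)$ and $\theta_x(y)$ agree. Both are unit vectors, so their normal components then agree up to sign. On $\partial D^+_{x,z}$ both normal components are negative, hence $\theta_z\cdot n=\theta_x\cdot n<0$ and $\theta_x=\theta_z-2(\theta_z\cdot n)n$. This is the law of reflection: the incoming direction $\theta_z$ is mirrored to $-\theta_x$, and the ray $z\to y\to x$ reflects specularly off the tangent plane.

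\textbf{Existence.} We minimize $\psi$ over the closure of $\partial D^+_{x,z}$. Take a point $y$ on the relative boundary of this set, so that $n(y)\cdot(y-z)=0$ (or the analogous equality for $x$). Then the segment $[z,y]$ is tangent to $\partial D$. We will show that $\psi$ has a negative directional derivative at $y$ pointing into the set, so the minimum is attained in the interior and is a stationary point.

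An alternative, cleaner route treats the half-space $H_y=\{w:\,n(y)\cdot(w-y)\ge0\}$ that contains $D$. Since $x,z$ see $y$ from outside, both lie on the non-$D$ side. For a stationary $y$ in $\partial D^+_{x,z}$, the reflection argument shows that $y$ minimizes $\|x-w\|+\|w-z\|$ over the plane $\partial H_y$. Hence $\psi(y)\le \psi$ at any other reflection point evaluated on that plane.

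\textbf{Uniqueness.} Suppose $y_1\ne y_2$ are two stationary points in $\partial D^+_{x,z}$. Convexity gives $n(y_1)\cdot(y_2-y_1)\le0$ and $n(y_2)\cdot(y_1-y_2)\le0$. Write the reflection conditions as $\theta_z(y_i)-\theta_x(y_i)=\lambda_i n(y_i)$ with $\lambda_i=2\theta_z(y_i)\cdot n(y_i)<0$. Now consider
\[
\big(\nabla\psi(y_1)-\nabla\psi(y_2)\big)\cdot(y_1-y_2).
\]
The function $w\mapsto\|x-w\|+\|w-z\|$ is convex on $\R^N$, and strictly convex along any segment not collinear with $x$ or $z$, so this quantity is $\ge0$, and $>0$ in the generic case. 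On the other hand it equals $\lambda_1 n(y_1)\cdot(y_1-y_2)+\lambda_2 n(y_2)\cdot(y_2-y_1)$, and both terms are $\le0$ by the sign conditions. Equality therefore forces both terms to vanish, which puts $y_1-y_2$ in both tangent planes. By strict convexity of $\partial D$ (which I would take as part of the convexity assumption), this implies $y_1=y_2$. The degenerate collinear case is handled separately: $y_1$, $y_2$ and $x$ (say) collinear contradicts $y_1$ and $y_2$ both being visible from $x$. Existence then follows by a degree or continuation argument, or by the boundary-derivative minimization above.

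\textbf{Non-degeneracy (the main obstacle).} We compute $H_{\partial D}\psi(y^+)$. The Euclidean Hessian of $\|w-z\|$ is $(I-\theta_z\theta_z^T)/\|w-z\|$, which is positive semidefinite, and similarly for $x$. The surface Hessian adds the curvature term $-(\nabla\psi\cdot n)\,L$, where $L$ is the shape operator, positive semidefinite for convex $D$ with the outward normal. Since $\nabla\psi\cdot n=\lambda=2\theta_z\cdot n<0$, the curvature term is $-\lambda L\succeq0$. The restriction of $(I-\theta_z\theta_z^T)$ to the tangent plane is positive definite because $\theta_z$ is not tangent ($\theta_z\cdot n\ne0$). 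Hence $H_{\partial D}\psi(y^+)$ is positive definite, and in particular $\det\ne0$.

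The delicate points are the sign conventions for the curvature term and the existence step. The Hessian positivity also gives local strict minimality, which supports the existence-by-minimization approach.
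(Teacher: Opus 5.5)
The gap is in \emph{existence}. Your non-degeneracy argument is the same as the paper's: the second derivative along a geodesic splits into the curvature term $-(\nabla\psi\cdot n)\,v\cdot Lv\ge 0$ and a Euclidean Hessian that is positive definite on the tangent plane. Your uniqueness argument is a valid alternative to the paper's. The paper compares the open sublevel ellipses $\{\psi<\psi(y^+)\}$ with $\ov{D}$. You instead use monotonicity of $\nabla\psi$ for the convex function $\psi$, together with $\nabla\psi(y_i)=\lambda_i n(y_i)$, $\lambda_i<0$, and the supporting-hyperplane inequalities. At the last step you need strict convexity of $D$, or strict convexity of $\psi$ along $[y_1,y_2]$ plus your collinear case.

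Your three existence arguments do not close:
\begin{itemize}
\item The boundary-minimization route is only announced (``we will show that $\psi$ has a negative directional derivative at $y$ pointing into the set'') and is never carried out. It is not routine. At a relative boundary point you need a tangent direction that decreases $\psi$ \emph{and} enters both $\partial D^+_x$ and $\partial D^+_z$. Whether $-\theta_z$ enters $\partial D^+_z$ depends on the shape operator, and it is delicate at points grazing both $x$ and $z$ when $N=3$, or where curvature vanishes.
\item Your ``cleaner route'' starts from a stationary $y\in\partial D^+_{x,z}$, so it presupposes what is to be proved.
\item ``A degree or continuation argument'' is a placeholder.
\end{itemize}

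The missing idea, which is the paper's, is to minimize $\psi$ over all of the compact surface $\partial D$. A global minimizer $y_0$ exists and is automatically stationary, so it suffices to show $y_0\in\partial D^+_{x,z}$.
\begin{itemize}
\item Any $y\in\partial D^+_{x,z}$ gives an open half-space $\{w:\,n(y)\cdot(w-y)>0\}$ that contains $x,z$ and misses $\ov{D}$. Hence $[x,z]\cap\ov{D}=\emptyset$ and $\psi(y_0)>\|x-z\|$.
\item The open ellipse $E_0=\{w:\|w-x\|+\|w-z\|<\psi(y_0)\}$ therefore contains $[x,z]$ and misses $\partial D$ by minimality. Being connected and containing points outside $\ov{D}$, it misses $\ov{D}$ entirely.
\item $E_0$ and $\ov{D}$ are disjoint convex sets touching at $y_0$, so the tangent plane of $\partial D$ at $y_0$ separates them. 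Since $E_0$ is open, the foci lie strictly on the non-$D$ side, i.e.\ $n(y_0)\cdot(y_0-x)<0$ and $n(y_0)\cdot(y_0-z)<0$.
\end{itemize}
This is exactly where the hypothesis $\partial D^+_{x,z}\neq\emptyset$ is used. Without it, $[x,z]$ meets $\partial D$ and the minimum $\|x-z\|$ is attained at non-illuminated points.

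There is also a sign slip. With $\theta_x(y)=(y-x)/\|y-x\|$, one has $\nabla\psi=\theta_x+\theta_z$, not $\theta_z-\theta_x$. Your formula $\theta_x=\theta_z-2(\theta_z\cdot n)n$ gives $\theta_x\cdot n=-\theta_z\cdot n$, which contradicts your own claim $\theta_x\cdot n=\theta_z\cdot n<0$. The correct reflection law is $\theta_x+\theta_z=2(\theta_z\cdot n)n$. This is harmless for the rest, because you only use $\nabla\psi(y_i)=\lambda_i n(y_i)$ with $\lambda_i<0$, which follows directly from stationarity and illumination.
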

\begin{proof}
Let $y^+\in \partial D^+_{x,z}$ be a stationary point of $\psi(y) = \|x-y\|+\|y-z\|$, we define an elliptic disk (or solid ellipsoid)
$$
E^+:=\{y\in \R^N: \|y-x\|+\|y-z\| < \psi(y^+)\},
$$
which is tangent to $D$ at $y^+$ because the normal $n(y^+)$ is parallel to $\nabla\psi(y^+)$ and $\nabla\psi(y^+)$ is parallel to the normal of $\pa E^+$ at $y^+$.

Since $y^+\in \partial D^+_{x,z}$, we have $n(y^+)\cdot \nabla \psi(y^+)<0$. Hence $E^+$ and $D$ are separated by the hyperplane $n(y^+)\cdot (x-y^+) = 0$. Therefore, 
\begin{equation}\label{EcapD=emptyset}
    E^+\cap \ov{D}=\emptyset.
\end{equation}

{\bf Uniqueness.} We first prove the uniqueness by contradiction. Assume on the contrary, there are two different stationary points $y^+$ and $\tilde{y}^+$ on $\partial D_{x,z}^+$. Then we define two elliptic disks (or solid ellipsoids) $E^+, \tilde{E}^+$ as above. 

If $E^+= \tilde{E}^+$, then $(y^++\tilde{y}^+)/2\in E^+$, which is strictly convex, and $(y^++\tilde{y}^+)/2\in \ov{D}$. This contradicts the equality \eqref{EcapD=emptyset}.
Otherwise, suppose $\psi(y^+)>\psi(\tilde{y}^+)$, then $\tilde{y}^+\in E^+\cap \ov{D}$, a contradiction to \eqref{EcapD=emptyset}. This completes the proof of uniqueness.

{\bf Existence.} We next give a constructive proof of the existence. Let $y_0\in \pa D$ be such that $\psi(y_0)=\inf_{y\in \partial D} \psi(y)$, which is naturally a stationary point of $\psi$. It suffices to show that $y_0\in \partial D^+_{x,z}$. 
Denote by $\Delta_{xyz}$ the closed triangle formed by $x, z$ and some $y\in \partial D^+_{x,z}$. For any point $x_0\in \Delta_{xyz}$, there exists $\lambda, \mu\in[0,1]$ such that
\ben
x_0-y = \mu\left(\lambda (x-y) + (1-\lambda)(z-y)\right).
\enn
Hence, $(x_0-y)\cdot n(y)\geq 0$ because $(x-y)\cdot n(y)>0$ and $(z-y)\cdot n(y)>0$. Moreover, $(x_0-y)\cdot n(y)= 0$ if and only if $x_0=y$. Since $\ov{D}$ is convex, we have $(y_0-y)\cdot n(y)\leq 0$ for any $y_0\in \ov{D}$. It follows that $\Delta_{xyz} \cap \ov{D} = \{y\}$, which implies that $\{\lambda x+(1-\lambda)z|\lambda\in [0,1]\}\cap \ov{D}=\emptyset$. Consequently, 
\begin{equation*}
    \psi(y)>\|x-z\|\quad\mbox{for all }\, y\in \partial D.
\end{equation*}
In particular, $\psi(y_0) >\|x-z\|$. Following the arguments before the uniqueness proof,  we define an elliptic disk (or solid ellipsoid) with focuses $x, z$:
\ben
E_0:= \{y\in \R^2: \|y-x\|+\|y-z\| < \psi(y_0)\}.
\enn
Then $E_0\cap \ov{D} = \emptyset$. Hence $E_0$ and $D$ lie on the different side of the hyperplane $\{y\in \R^N|(y-y_0)\cdot n(y_0)=0\}$. It's straightforward to verify 
\begin{equation*}
    n(y_0)\cdot(y_0-x)<0\quad\mbox{and}\quad n(y_0)\cdot(y_0-z)<0,
\end{equation*}
i.e., $y_0\in \partial D^+_{x,z}$. This completes the proof of existence.

\textbf{Non-degeneracy.} We establish this by demonstrating that the surface Hessian $H_{\partial D}\psi(y_0)$ is strictly positive definite, which requires $v^T H_{\partial D}\psi(y_0) v > 0$ for any non-zero tangent vector $v \in T_{y_0}(\partial D)$. Let $\sigma(s)$ be the geodesic on $\partial D$ passing through $y_0$ with initial velocity $v$, i.e., $\sigma(0) = y_0$ and $\sigma'(0) = v$. Evaluating the second derivative of $\psi$ along $\sigma(s)$ at $s=0$ yields
\ben
v^T H_{\partial D}\psi(y_0) v = \left. \frac{d^2}{ds^2} \psi(\sigma(s)) \right|_{s=0} = v^T \nabla^2\psi(y_0)v + \nabla\psi(y_0)\cdot \sigma''(0).
\enn
Since $\sigma(s)$ is a geodesic on the boundary of the strictly convex obstacle $D$, its acceleration $\sigma''(0)$ at the stationary point is entirely normal and points inward (opposite to the outward unit normal $n(y_0)$). Furthermore, because $y_0$ lies in the illuminated region $\partial D^+_{x,z}$, we have $\nabla\psi(y_0) \cdot n(y_0) < 0$. Consequently, the geometric term is non-negative: $\nabla\psi(y_0)\cdot \sigma''(0) \geq 0$. 

For the space Hessian term of $\psi$, straightforward calculation gives
\ben
v^T \nabla^2\psi(y_0)v = \frac{\|v\|^2 - (v\cdot \theta_x(y_0))^2}{\|x-y_0\|} + \frac{\|v\|^2 - (v\cdot \theta_z(y_0))^2}{\|z-y_0\|} > 0,
\enn
where the strict inequality holds because the tangent vector $v$ cannot be parallel to the transversal incident and observation ray directions $\theta_x(y_0)$ and $\theta_z(y_0)$.

Combining these results, we conclude that $v^T H_{\partial D}\psi(y_0) v > 0$. Thus, $H_{\partial D}\psi(y_0)$ is strictly positive definite, implying $\det H_{\partial D}\psi(y_0) > 0$. Therefore, the stationary point $y_0 \in \partial D^+_{x,z}$ is non-degenerate.
\end{proof}

We remark that, for $x=z$, the stationary point of $\psi$ on $\partial D^+_x$ is exactly $y_0:=\mathop{arg min}_{y\in \partial D} \|y-x\|$, i.e., the closest point to $x$ on $\pa D$. 

\section{Proof of Main Results}\label{Proofs}

\subsection{Operators on boundary}
To reduce the wave propagation problem in the unbounded exterior domain $\R^N\setminus \overline{D}$ to an equivalent formulation on the boundary $\partial D$, we introduce two operators $\mathcal{F}_1$ and $\mathcal{F}_2: C^\infty(\partial D\times \mathbb{R}) \to \mathcal{E}'(\partial D\times \mathbb{R})$ associated with the time-domain scattering problem \eqref{eq-scatteringEQs-timedomain} as follows
\be \label{eq-F1F2}
\begin{aligned}
\mathcal{F}_1(\hat{\mathcal{B}} w) := w|_{\partial D\times \mathbb{R}},\quad
\mathcal{F}_2(\hat{\mathcal{B}} w) := \left.\frac{\partial w}{\partial n}\right|_{\partial D\times \mathbb{R}}.
\end{aligned}
\en
It is shown in \cite{Majda1976, Taylor1976} that  $\mathcal{F}_1$ and $\mathcal{F}_2$ are Fourier integral operators of finite order. Moreover, $\mathcal{F}_1$ and $\mathcal{F}_2$ are PDOs of finite order away from the grazing direction, and their principal symbols are given as follows:
\begin{proposition}[\cite{Majda1976}] \label{prop-F1F2}
For Dirichlet boundary condition, $\mathcal{F}_1=I$ and the principal symbol of $\mathcal{F}_2$ is
$$
i\tau^{\dag}\in S^1_{cl}(\partial D\times \mathbb{R})
$$
away from the grazing direction, i.e., $\|\xi\|<|\xi_0|$, where
\be\label{eq:tauplus-general}
\tau^{\dag}=\tau^{\dag}(y,t,\xi,\xi_0):=\left\{
\begin{aligned}
&-\sqrt{ \xi_0^2-\|\xi\|^2 }, &\xi_0>0,\\
&\sqrt{ \xi_0^2-\|\xi\|^2 }, &\xi_0<0.
\end{aligned}
\right.
\en
For Neumann and Robin boundary condition, the principal symbol of $\mathcal{F}_1, \mathcal{F}_2$ are, respectively,
$$
(i\tau^{\dag} - i\gamma \xi_0)^{-1}\in S^{-1}_{cl}(\partial D\times \mathbb{R})\quad\mbox{and}\quad i\tau^{\dag}(i\tau^{\dag} - i\gamma \xi_0)^{-1}\in S^{0}_{cl}(\partial D\times \mathbb{R})
$$
away from the grazing direction.
\end{proposition}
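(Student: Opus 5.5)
Since this proposition is quoted from \cite{Majda1976} (see also \cite{Taylor1976}), the plan is to recover its content by the standard microlocal parametrix construction. The principal symbols are computed from a frozen-coefficient half-space model, and the parametrix justifies that the model gives the leading term away from grazing. First I localize: take a chart $\Omega_j$ of $\partial D$ and introduce boundary normal coordinates $(y,s)$, with $s>0$ the distance to $\partial D$ along the exterior normal. In these coordinates $\partial_t^2-\Delta$ becomes $\partial_t^2-\partial_s^2-\Delta_{y,s}'+(\text{first-order terms})$, where $\Delta'_{y,s}$ is the Laplace--Beltrami operator of the parallel surface. Only its principal part at $s=0$, namely $\partial_t^2-\partial_s^2-g^{ab}(y)\partial_a\partial_b$, affects principal symbols. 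Let $(\xi,\xi_0)$ be dual to $(y,t)$, with $\|\xi\|$ measured in the metric $g(y)$ and the Fourier convention of the paper, so that $\partial_t\mapsto i\xi_0$.

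Next I solve the model problem. I seek solutions of the form $e^{i(y\cdot\xi+t\xi_0)}e^{i\tau s}$. The wave equation forces $-\xi_0^2+\|\xi\|^2+\tau^2=0$, i.e.\ $\tau=\pm\sqrt{\xi_0^2-\|\xi\|^2}$. This is real exactly in the hyperbolic region $\|\xi\|<|\xi_0|$, which is where grazing rays are excluded. The choice of root comes from the condition $w=0$ for $t\ll0$. By Paley--Wiener, $\hat w$ extends holomorphically in $\xi_0$ to the half-plane where $e^{it\xi_0}$ decays as $t\to-\infty$. Replacing $\xi_0$ by $\xi_0-i\varepsilon$ selects the root for which $e^{i\tau s}$ is bounded as $s\to\infty$, i.e.\ $\operatorname{Im}\tau>0$. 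Following this root back to real $\xi_0$ gives $\tau=\tau^\dag$ with the sign pattern \eqref{eq:tauplus-general}: $-\sqrt{\cdot}$ for $\xi_0>0$ and $+\sqrt{\cdot}$ for $\xi_0<0$. On this single outgoing mode $\partial_n=\partial_s$ acts as multiplication by $i\tau^\dag$.
\begin{itemize}
\item Dirichlet: trivially $\mathcal F_1=I$, and $\mathcal F_2$ has principal symbol $i\tau^\dag$.
\item Neumann/Robin: $\hat{\mathcal B}w=\partial_nw-\gamma\partial_tw$ has symbol $i\tau^\dag-i\gamma\xi_0$. Inverting gives principal symbol $(i\tau^\dag-i\gamma\xi_0)^{-1}$ for $\mathcal F_1$ and $i\tau^\dag(i\tau^\dag-i\gamma\xi_0)^{-1}$ for $\mathcal F_2$. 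Neumann is the case $\gamma=0$, whose symbol $(i\tau^\dag)^{-1}$ is homogeneous of degree $-1$ and smooth on the open hyperbolic region, where $\tau^\dag\neq0$.
\end{itemize}
Ellipticity holds in the hyperbolic region. Because $\tau^\dag$ has sign opposite to $\xi_0$ and $\gamma>0$, the two terms in $\tau^\dag-\gamma\xi_0$ have the same sign, so $|\tau^\dag-\gamma\xi_0|\ge\gamma|\xi_0|$. The symbols therefore lie in $S^1$, $S^{-1}$ and $S^0$ respectively, on conic sets strictly inside $\|\xi\|<|\xi_0|$.

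To pass from the model to $\mathcal F_1,\mathcal F_2$, I build a geometric-optics parametrix $w\sim\int e^{i\phi(y,s,t,\xi,\xi_0)}\sum_j a_j\,\hat g\,d\xi\,d\xi_0$, following Lax and Majda. The phase solves the eikonal equation $\phi_t^2=|\nabla\phi|^2$ with $\phi|_{s=0}=y\cdot\xi+t\xi_0$ and $\partial_s\phi|_{s=0}=\tau^\dag$. The amplitudes solve transport equations, and the lower-order symbols $a_{m-j}$ involve the curvature of $\partial D$. This makes both operators PDOs on the hyperbolic cone with the principal symbols above. The remaining step is to show that the true solution differs from the parametrix by a smoothing error microlocally away from grazing. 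I would combine energy estimates for the mixed problem, which are available because the Robin condition with $\gamma>0$ is dissipative, with propagation of singularities for the boundary problem (Taylor's reflection theorem). This gives $C^\infty$ errors on the hyperbolic set.

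I expect the main obstacle to be this last justification. The difficulty is controlling the error uniformly up to (but not including) the glancing set and cutting off microlocally there. The symbol computation itself is routine. I would first impose a conic cutoff $\chi(\xi,\xi_0)$ supported in $\|\xi\|\le(1-\delta)|\xi_0|$ and prove the remainder bound for each fixed $\delta>0$.
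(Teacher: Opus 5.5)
Your symbol computation is correct. Your route is essentially the argument of \cite{Majda1976} that the paper cites in place of a proof: boundary normal coordinates, the frozen-coefficient half-space model with the outgoing root fixed by causality via $\xi_0\mapsto\xi_0-i\varepsilon$, a Lax--Majda parametrix, and Taylor's reflection of singularities. The sign pattern of $\tau^{\dag}$ and the bound $|\tau^{\dag}-\gamma\xi_0|\ge\gamma|\xi_0|$ are both right.

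The gap is the Robin condition in two dimensions. The paper's second remark after the proposition says Majda's argument does not cover this case. That argument relies on Morawetz's local energy decay \cite{Morawetz1975}, which is not available there. The paper handles this case separately with the potential-operator approach of \cite{LiLiuShi2025}. Your proposal treats all cases uniformly and does not address this.

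A decay input is needed because the operators are not applied only to data compactly supported in $t$:
\begin{itemize}
\item for $N=2$, the data $\hat{\mathcal B}w^i$ carries the tail of $H(t-\|y-z\|)/(2\pi\sqrt{t^2-\|y-z\|^2})$;
\item the adjoints are applied to $e^{ik\varphi(y,t)}$ over all $t\in\mathbb R$;
\item the identification $u=\int_{\mathbb R}e^{ikt}w\,dt$, and the claim that the smooth remainder of the parametrix contributes only $O(k^{-\infty})$, both require that remainder and its $t$-derivatives to be integrable in $t$.
\end{itemize}

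Your justification step uses energy estimates ``available because the Robin condition with $\gamma>0$ is dissipative''. Dissipativity only gives $\frac{d}{dt}E(t)=-\int_{\partial D}\gamma|\partial_t w|^2\,ds\le 0$, i.e.\ bounded energy, not decay.

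What you have actually proved is the local microlocal statement: the symbol of the kernel near the diagonal in the hyperbolic region. That needs only local well-posedness, finite propagation speed and the ellipticity you checked. In the other cases, the missing global-in-time input is what Majda takes from Morawetz. For Robin with $N=2$ you need a genuinely additional argument, such as the frequency-domain boundary-integral analysis the paper refers to.
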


We make the following remarks on Proposition\ref{prop-F1F2}:
\begin{itemize}
    \item[1.] Majda \cite{Majda1976} does not state this proposition directly. Instead, it states the localized version in page 271 where $\partial D$ is split by partition of unity. Then Majda carefully excludes the influence of the partition function. Here we combine these results as Proposition \ref{prop-F1F2}.
    \item[2.] Majda \cite{Majda1976} proves Proposition \ref{prop-F1F2} for all cases except the Robin boundary condition in two dimensions, as the energy decay results of Morawetz \cite{Morawetz1975} does not hold in this case. We resolve this issue in \cite{LiLiuShi2025} via the potential operator approach.
\end{itemize}

\subsection{Lemma on PDO action}

\begin{lemma}\label{lem:pdo-action}
Given a $N$-dimension smooth manifold $M$ and two functions $\varphi, F\in C^\infty(M)$.
For a clasical PDO $P$ of order $m$ in $M$ with homogeneous principal symbol $a_m$, we have
$$
(P(e^{ik \varphi} F))(x) = k^m a_m(x, \nabla_x \varphi ) e^{ik \varphi(x)} F(x)+O(k^{m-1}), \quad k\to\infty.
$$
\end{lemma}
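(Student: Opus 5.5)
The plan is to reduce to a local computation in $\R^N$ via the manifold definition, then do the standard oscillatory-testing argument. I take $F\in C^\infty_0(M)$, or else insert a cutoff, since $P$ acts on $C_0^\infty(M)$. Using the cover $\{\Omega_j\}$ and a subordinate partition of unity $\{\chi_j\}$, write $e^{ik\varphi}F=\sum_j e^{ik\varphi}\chi_j F$. Each piece, pushed forward by $F_j$, is an oscillatory function $e^{ik\tilde\varphi}\tilde F$ on $\R^N$ with $\tilde F\in C_0^\infty$.

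For the term with $x\in\Omega_i$ and $j\neq i$ with disjoint supports, the operator $Q_{ij}$ acts off-diagonal. Its Schwartz kernel is smooth there, so the contribution is $O(k^{-\infty})$ by repeated integration by parts in $y$ using $\nabla\varphi$. If $\nabla\varphi$ vanishes somewhere, one only gets boundedness, and that is still $O(k^{m-1})$ whenever $m\ge1$. I will not dwell on this point. For the overlapping pieces, principal symbols transform covariantly under diffeomorphisms, with $a_m(x,\xi)$ evaluated at the pulled-back covector. This is why the answer is written intrinsically as $a_m(x,\nabla_x\varphi)$, and summing over $j$ with $\sum\chi_j=1$ recovers $F(x)$.

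In a single chart I write
\[
Q(e^{ik\varphi}F)(x)=\frac{1}{(2\pi)^N}\iint e^{i(x-y)\cdot\xi+ik\varphi(y)}a(x,\xi)F(y)\,dy\,d\xi .
\]
Substituting $\xi=k\eta$ gives phase $k\Phi$ with $\Phi=(x-y)\cdot\eta+\varphi(y)$. Its unique nondegenerate critical point is $y=x$, $\eta=\nabla\varphi(x)$, with Hessian signature $0$ and determinant of modulus $1$. Stationary phase in $2N$ variables yields
\[
k^N (2\pi/k)^{N}(2\pi)^{-N}\,a(x,k\nabla\varphi(x))F(x)e^{ik\varphi(x)}\bigl(1+O(k^{-1})\bigr).
\]
Then \eqref{eq:symbolerror} and homogeneity give $a(x,k\nabla\varphi)=k^m a_m(x,\nabla\varphi)+O(k^{m-1})$.

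The main obstacle is that the $\eta$-integral is not compactly supported. The symbol grows like $|\xi|^m$ and is not smooth at $\xi=0$ after rescaling. I would split with a cutoff $\rho(\eta)$ equal to $1$ near $\nabla\varphi(x)$ and supported in $\{|\eta|>c\}$; this assumes $\nabla\varphi(x)\neq0$, which holds in the applications away from grazing. On the support of $1-\rho$, the phase has $\nabla_y\Phi=\nabla\varphi(y)-\eta\neq0$ near $y=x$, and I would use a further cutoff in $y$. Integration by parts in $y$, via the operator $L=(\nabla\varphi(y)-\eta)\cdot\nabla_y/(ik|\nabla\varphi-\eta|^2)$, gives arbitrary decay $k^{-L}(1+|\eta|)^{-L}$. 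This controls both the large-$\eta$ tail and the small-$\eta$ region, where the symbol bounds $|a|\le C(1+k|\eta|)^m$ suffice. The localized part is compactly supported and smooth, so standard stationary phase (for example H\"ormander's theorem 7.7.5) applies, with remainder uniform because the symbol estimates give bounded derivatives of $k^{-m}a(x,k\eta)$ on the support of $\rho$.
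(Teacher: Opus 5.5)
Your proposal is correct and follows the paper's route---rescale $\xi=k\eta$, locate the unique nondegenerate critical point $(y,\eta)=(x,\nabla\varphi(x))$ of $\Phi=(x-y)\cdot\eta+\varphi(y)$ (with $|\det\nabla^2\Phi|=1$, signature $0$), and apply stationary phase in $2N$ variables---while supplying what the paper omits. That includes the chart localization, the cutoffs in $\eta$ and $y$ with non-stationary-phase integration by parts, and the use of \eqref{eq:symbolerror} only at the single point $\eta=\nabla\varphi(x)$ after stationary phase has been applied to $k^{-m}a(x,k\eta)$, instead of inserting $a_m+O(1/k)$ under a non-absolutely-convergent integral as the paper does.

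The hypothesis you set aside is genuinely needed, not merely technical: for $m<1$ a nondegenerate critical point of $\varphi$ in $\operatorname{supp}F\setminus\{x\}$ generically contributes, via the smooth off-diagonal kernel, a term of exact order $k^{-N/2}$, which is not $O(k^{m-1})$ when, e.g., $m=-1$ and $N\le 3$. So the lemma should assume $\nabla\varphi\neq0$ on all of $\operatorname{supp}F$. This holds everywhere in the paper's application, not merely away from grazing, because there $\partial_t\varphi=-1$.
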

\begin{proof}
We prove for the case when $M=\R^N$. The general case can be proved similarly.

Let $a(x,\xi)$ be the symbol of $P$, with the help of \eqref{eq:symbolerror}, we obtain
\ben
\begin{aligned}
(P(e^{ik \varphi} F))(x) &= \left(\frac{1}{2\pi}\right)^N\int_{\R^N}\int_{\R^N} e^{i\xi\cdot(x-y) + ik \varphi(y)} F(y) a(x, \xi) dyd\xi\\
&=\left(\frac{k}{2\pi}\right)^N\int_{\R^N}\int_{\R^N} e^{ik\Phi(x,y,\eta)} F(y) a(x, k\eta) dyd\eta \\
&= \left(\frac{k}{2\pi}\right)^N k^m\int_{\R^N}\int_{\R^N} e^{ik\Phi(x,y,\eta)} F(y) \left(a_m(x, \eta) + O\left(\frac{1}{k}\right)\right)dyd\eta
\end{aligned}
\enn
with $\Phi(x,y,\eta) := \eta\cdot(x-y)+\varphi(y)$. 
Straightforward calculations show that 
\begin{equation}\label{gradient_of_Phi}
    \partial_y \Phi = \nabla_y \varphi -\eta, \partial_\eta \Phi = x-y
\end{equation}
and the Hessian matrix of $\Phi$ with respect to $(y,\eta)$ is given by
\ben
\nabla^2 \Phi=
\begin{pmatrix}
    \nabla_y^2\varphi & -I\\
    -I& 0
\end{pmatrix},
\enn
which is non-degenerate. From \eqref{gradient_of_Phi} we deduce that $\Phi$ has a non-degenerate stationary point $(x, \nabla_x\psi)$.
This lemma then follows by the stationary phase method (see \cite{Zworski2012}[Thm 3.16]).
\end{proof}


\subsection{Proof of Theorem \ref{thm:general} and Theorem \ref{thm:backscattering}}
Let $\left\langle\cdot, \cdot\right\rangle$ be the action of a distribution on a test function defined by
\ben
\left\langle T, u\right\rangle := \int_{\partial D}\int_\mathbb{R} T \ov{u} dtds(y),
\enn
where $\ov{u}$ is the complex conjugate of $u$.
By Green's formula, we derive the following representation for the scattered field:
\begin{align}\label{eq-doAdjoint}
u^s(x;z,k) &= \int_{\partial D}\left[u^s(y;z,k) \frac{\partial G(x,y,k)}{\partial n(y)} -
\frac{\partial u^s}{\partial n}(y;z,k) G(x,y,k)\right] ds(y) \notag\\
&=C_N(k) \int_{\partial D} \frac{e^{ik\|x-y\|} }{\|x-y\|^{\frac{N-1}{2}}}\left[ ik \theta_x(y)\cdot n(y) u^s(y;z,k) - \frac{\partial u^s}{\partial n}(y;z,k)  
\right] ds(y)\notag\\
&= C_N(k) \int_{\partial D}\int_\mathbb{R} \frac{e^{ik\|x-y\|} }{\|x-y\|^{\frac{N-1}{2}}}
\left[ ik \theta_x(y)\cdot n(y) w(y,t) - \frac{\partial w}{\partial n}(y,t)  
\right] dtds(y)\notag\\
&= -C_N(k)\left\langle(ik \theta_x(y)\cdot n(y) \mathcal{F}_1 - \mathcal{F}_2) \mathcal{B} w^i,\, \frac{e^{ik\varphi(y,t)} }{\|x-y\|^{\frac{N-1}{2}}}
\right\rangle\notag\\
&= -C_N(k)\left\langle
\mathcal{B} w^i,\, (-ik  \mathcal{F}_1^*\theta_x(y)\cdot n(y) - \mathcal{F}_2^*)\left( \frac{e^{ik\varphi(y,t)} }{\|x-y\|^{\frac{N-1}{2}}}
\right)\right\rangle.
\end{align}
Here, $\varphi(y,t)=-\|x-y\|-t$ is the phase function, $w$ is the solution of the boundary value problem \eqref{eq-scatteringEQs-timedomain} and $\mathcal{F}_1^*$, $\mathcal{F}_2^*$ are the adjoint operators of $\mathcal{F}_1, \mathcal{F}_2$, respectively.

By \eqref{eq:tauplus-general}, 
\ben
\tau^{\dag}= \tau^{\dag}(y,t,\Grad_y \varphi(y,t), \partial_t\varphi(y,t))=\tau^{\dag}(y,t,-P_{\partial D}(y)\theta_x(y),-1)  = |\theta_x(y)\cdot n(y)|.
\enn
Hence the grazing directions are characterized by $\theta_x(y)\cdot n(y)=0$. By Lemma \ref{lem:pdo-action} and Propsition \ref{prop-F1F2}, when $\theta_x(y)\cdot n(y)\neq 0$, we have
\ben
\begin{aligned}
\quad(-ik  \mathcal{F}_1^*\theta_x(y)\cdot n(y) - \mathcal{F}_2^*)\left(
\frac{e^{-ik\|x-y\|-t} }{\|x-y\|^{\frac{N-1}{2}}} \right) = C_{\mathcal{B}}(y;x,\gamma) \frac{e^{-ik\|x-y\|-t} }{\|x-y\|^{\frac{N-1}{2}}} + O\left( k^\alpha\right),\quad k\to\infty,
\end{aligned}
\enn
where 
\ben
C_{\mathcal{B}}(y;x,\gamma) = \left\{
\begin{aligned}
&-ik\left(\theta_x(y)\cdot n(y)-\tau^{\dag}\right), &\text{for Dirichlet case;}\\
&\frac{\theta_x(y)\cdot n(y)-\tau^{\dag}}{\tau^{\dag} +\gamma(y)}, &\text{for Neumann or Robin case}
\end{aligned}
\right.
\enn
and 
\ben
\alpha = \left\{
\begin{aligned}
&0, &\text{for Dirichlet case;}\\
&-1, &\text{for Neumann or Robin case.}
\end{aligned}
\right.
\enn
Hence, the quantity $C_{\mathcal{B}}$ vanishes on the non-illuminated side (where $\theta_x\cdot n>0$). The leading term of \eqref{eq-doAdjoint} is determined by the wave behavior on the illuminated side or the grazing part.

Let $\Gamma_G\subset \partial D$ denote the neighborhood of the grazing points where $\theta_x(y)\cdot n(y)=0$. We then have
\begin{align}\label{eq-doPartition}
u^s(x;z,k) &= -C_N(k) \int_{\partial D \setminus \Gamma_G} \left[
\overline{C_{\mathcal{B}}(y;x,\gamma) }\frac{e^{ik\|x-y\|}}{\|x-y\|^{\frac{N-1}{2}}} +O(k^{\alpha})\right]
\int_\mathbb{R} e^{ikt} \hat{\mathcal{B}}w^i dtds(y) +I_{\Gamma_G} \notag\\
&=-C_N(k) \int_{\partial D\setminus \Gamma_G} \left[
\overline{C_{\mathcal{B}}(y;x,\gamma) }\frac{e^{ik\|x-y\|}}{\|x-y\|^{\frac{N-1}{2}}} +O(k^{\alpha})\right]\mathcal{B} u^i ds(y)+I_{\Gamma_G},
\end{align}
where $I_{\Gamma_G}$ is defined as
\be \label{eq:I_GammaG}
I_{\Gamma_G} := C_N(k) \int_{\Gamma_G} \int_\R \frac{e^{ik\|x-y\|+t} }{\|x-y\|^{\frac{N-1}{2}}} \left[ ik \theta_x(y)\cdot n(y) w(y,t) - \frac{\partial w}{\partial n}(y,t)  
\right] dt ds(y).
\en
Since $\mathcal{F}_1$ and $\mathcal{F}_2$ are Fourier integral operators, they constrain the wavefront sets $\mathop{WF}(w)$ and $\mathop{WF}(\frac{\partial w}{\partial n})$ via the singular support of $w^i|_{\partial D\times \R}$. More precisely, we have
\be \label{eq-WFofw0}
\begin{aligned}
    &\mathop{WF}(w|_{\partial D\times \R}),\quad \mathop{WF}\Big( \frac{\partial w}{\partial n}\Big{|}_{\partial D\times \R} \Big) \\
    & \subset \{
    (y,t,\xi,\xi_0)\in (\partial D \times \R)\times (\R^{N-1}\times \R):\, \|y-z\|-t=0, (\xi,\xi_0) = (P_{\partial D}(y)\theta_z(y),-1)
    \}\\
\end{aligned}
\en
Consider the phase function $\tilde{\varphi}:=\|y-x\|+t$ in \eqref{eq:I_GammaG}, it shows that 
\ben
\left(\Grad_y\tilde{\varphi},\frac{\partial \tilde{\varphi}}{\partial t}\right)=(P_{\partial D}(y)\theta_x(y),1).
\enn
This direction is parallel to the direction in \eqref{eq-WFofw0} only when 
\be\label{eq:WFcondi}
\theta_x(y)+\theta_z(y) \mathop{//} n(y).
\en
Geometrically, this condition is exactly Snell's law of specular reflection, which is satisfied exclusively at the stationary points $y_0\in \partial D$. Since $y_0$ is away from the grazing neighborhood $\Gamma_G$, the condition \eqref{eq:WFcondi} does not hold for any $y\in \Gamma_G$. By the fundamental properties of the wavefront set, $I_{\Gamma_G}$ decays rapidly as $k\to \infty$ \cite{LH-SPM}, i.e.,
\ben
I_{\Gamma_G} = O(k^{-p}) \quad \mbox{for any }p>0.
\enn

Now we compute the non-grazing part of \eqref{eq-doPartition}. Direct calculations yield
\ben 
\begin{aligned}
\mathcal{B}u^i &= \left\{
\begin{aligned}
&G(y,z,k), &\text{for Dirichlet;}\\
&\frac{\partial G(y,z,k)}{\partial n(y)} + ik \gamma G(y,z,k), &\text{for Neumann or Robin;}
\end{aligned}
\right.\\
&=\left\{
\begin{aligned}
&C_N(k)\frac{e^{ik\|y-z\|}}{\|y-z\|^{\frac{N-1}{2}}}\left( 1+O\left( \frac{1}{k} \right) \right), &\text{for Dirichlet;}\\
&ik(\theta_z\cdot n+\gamma)C_N(k)\frac{e^{ik\|y-z\|}}{\|y-z\|^{\frac{N-1}{2}}}\left( 1+O\left( \frac{1}{k} \right) \right), &\text{for Neumann or Robin;}
\end{aligned}
\right.
\end{aligned}
\enn


Under Assumption \ref{assumption:geometric}, we deduce by the stationary phase method that
\begin{align}\label{eq-Asym-u-proof}
&u^s(x;z,k) \notag\\
&= -ik C_{N}^2(k)\int_{\partial D\setminus \Gamma_G} \frac{e^{ik(\|x-y\|+\|y-z\|)}}{(\|x-y\|\|y-z\|)^{\frac{N-1}{2}}} (\theta_x(y)\cdot n(y)- \tau^{\dag})\frac{\gamma(y)+\theta_z(y)\cdot n(y)}{\gamma(y)+\tau^{\dag}} ds(y)+I_{\Gamma_G} \notag\\
&=A_N(x,z,k) \theta_x(y^+)\cdot n(y^+) \frac{\gamma(y^+)+\theta_z(y^+)\cdot n(y^+)}{\gamma(y^+)-\theta_z(y^+)\cdot n(y^+)}e^{ik\psi(y^+)} +O\left( k^{\frac{N-5}{2}} \right).
\end{align}
Here $y^+$ is the unique stationary point of $\psi$ in $\partial D^+_{x,z}$ as in Lemma \ref{lem:geometric}.

For Theorem \ref{thm:backscattering} (the backscattering case with $x=z$), the stationary point $y^+$ is the closest point to $x$ on $\partial D$. The above representation thus simplifies to
\be \label{eq-Asym-u0-backsca-proof}
u^s(x;x,k) = -A_N(x,x,k) \frac{\gamma(y^+)-1}{\gamma(y^+)+1}e^{2ik\|x-y^+\|}+O\left( k^{\frac{N-5}{2}} \right).
\en

\subsection{Proof of Theorem \ref{thm:uniqueness}}
It suffices to prove uniqueness in the case when $K = \{k_j=k_0+j\delta_k:\, j = 0, 1, \cdots\}$ with $\delta_k<\frac{\pi}{4R}$. Under the given assumptions, the asymptotic expansion \eqref{eq-Asym-u0-backsca-proof}  holds for all $x\in \Gamma_R$.
For any fixed $x\in \Gamma_R$, we define test functions $f_{x,k_j}$ of $t\in (0, 2R)$ by
\ben
\begin{aligned}
    f_{x, k_j}(t) &:=  \operatorname{Im} \left\{ 2k_j^{\frac{3-N}{2}} e^{\frac{N-3}{4}\pi i - 2ik_jt} u^s(x; x, k_j) \right\} \\
    &=\left(\frac{1}{2\pi\|x-y^+\|^2|\det H_{\partial D}\psi(y^+)|}\right)^{\frac{N-1}{2}} \frac{\gamma^+-1}{\gamma^++1}\sin(2k_j(t-\|x-y^+\|)) +O\left(\frac{1}{k_j}\right).
\end{aligned}
\enn
When $t = \|x-y^+\|$, we have $\lim_{j\to\infty} f_{x,k_j}(t) = 0$. For $t \neq \|x-y^+\|$, we claim that $\lim_{j\to\infty} |f_{x,k_j}(t)| > 0$ provided $\gamma^+\neq 1$. Thus, $\|x-y^+\|$ is uniquely determined for all $y^+$ with $\gamma^+\neq 1$.

We prove this claim by contradiction. Suppose, for contradiction, that $\lim_{j\to\infty} f_{x,k_j}(t) = 0$, and define $s:=t-\|x-y^+\|\in (-2R, 0)\cup (0, 2R)$. It follows that $\lim_{j\to\infty} \sin(2k_j s) =0$. Furthermore,
\ben
\lim_{j\to\infty} |f_{x,k_{j+1}}(t) - f_{x,k_{j}}(t)| =0,
\enn
which implies
\ben
|\sin(2k_{j+1}s)-\sin(2k_{j}s)| = 2\left|\cos\left((k_{j+1}+k_{j}) s\right) \sin \left(\delta_k s\right) \right|\to 0
\enn
as $j\to\infty$. Note that $\delta_k s \in (-\pi/2, 0)\cup (0, \pi/2)$, i.e., $\sin (\delta_k s)\neq 0$. Hence
\ben
\cos\Big((k_{j+1}+k_{j}) s\Big) = \cos(2 k_j s) \cos(\delta_k s) - \sin(2 k_j s) \sin(\delta_k s)\to 0.
\enn
Since $\sin(2k_j s)\to 0$, it follows that $|\cos(2k_j s)|\to 1$, which forces $\cos(\delta_k s)=0$. This contradicts the fact that $\delta_k s \in (-\pi/2, 0)\cup (0, \pi/2)$, which completes the proof of the claim.

We now turn to proving that the domain $D$ itself is uniquely determined. For any $x\in \Gamma_R$ with $\gamma(y^+(x))\neq 1$, we define the ball $B_x:= \{z\in B_R : \|z-x\| < \|x-y^+\|\}$. We claim that 
\ben
\overline{D} = \{z\in \R^N : \|z\|<R\}\backslash \bigcup_{x\in \Gamma_R} B_x.
\enn
Note that the union $\bigcup_{x\in \Gamma_R} B_x$ is fully determined from our data, since by Assumption \ref{assumption:boundcondi}, the equality $\gamma(x)=1$ holds for at most finitely many $x$.

Since $D$ is convex, the inclusion $\overline{D} \subset\{z\in \R^N : \|z\|<R\}\setminus \bigcup_{x\in \Gamma_R} B_x$ is straightforward. We prove the reverse inclusion by contradiction. Suppose, for contradiction, that there exists $z\in \R^N$ such that $\|z\|<R$, $z\notin \overline{D}$ and $z\notin B_x$ for all $x\in \Gamma_R$. Let $y_z\in\pa D$ be the projection of $z$ onto $\partial D$, i.e., $n(y_z)=(z-y_z)/\|z-y_z\|$. The projection property implies that
\ben
(z-y_z)\cdot(y-y_z)\leq 0, \quad \forall y\in D.
\enn
The ray starting at $y_z$ and pointing to $z$ intersects $\Gamma_R$ at some point $x_R$. Since $x_R-y_z = \lambda (z-y_z)$ for some $\lambda>1$, we have
\ben
(x_R-y_z)\cdot(y-y_z)\leq 0, \quad \forall y\in D,
\enn
i.e., $y_z$ is the projection of $x_R$ onto $\pa D$. This implies $\|z-x_R\|=(1-\lambda^{-1})\|y_z-x_R\|<\|y_z-x_R\|= \|x_R-y^+(x_R)\|$, contradicting the assumption that $z\notin B_{x_R}$. Consequently, $D$ is uniquely determined.

With $D$ now uniquely determined, the quantity
\ben
q:=\frac{\gamma^+-1}{\gamma^++1} = -\lim_{j\to\infty}  A^{-1}_N(x,x,k_j)e^{- 2ik_j\|x-y^+\|} u^s(x; x, k_j)
\enn
is uniquely determined. Consequently, the impedance value \begin{equation*}
    \gamma^+ = \frac{1+q}{1-q}
\end{equation*}
is then uniquely determined. Note that $q \equiv 1$ corresponds to the Dirichlet boundary condition, and $q\equiv -1$ corresponds to the Neumann boundary condition. This completes the proof of Theorem 2.6.

\section{Three-step reconstruction algorithm}\label{sec-Numeric}

This section is devoted to the numerical implementation of the inverse problem based on the asymptotic formulas derived in the previous sections. To stably decouple the strong nonlinearity between the obstacle's geometry and its physical properties, the algorithm proceeds in three steps: 
\begin{itemize}
    \item[(1)]{\bf Qualitative sampling method for shape reconstruction}, which extracts a robust initial guess of the boundary from the near-field backscattering data without requiring prior knowledge of the boundary condition;
    \item[(2)] {\bf Quantitative optimization for extracting boundary}, which refines the initial geometric guess by fitting a smooth, parameterized curve to the extracted reference points;
    \item[(3)] {\bf Decoupled boundary condition reconstruction}, which leverages the optimized geometry to identify and recover the physical properties.
\end{itemize}

\begin{algorithm}[ht]
\caption{Three-Step Algorithm for Identifying Obstacles from backscattering data}
\KwIn{Scattered field data $\{u^s(x_i, x_i, k_j)\}$ for source positions $\{x_i\}_{i=1}^{N_s}$ and wavenumbers $\{k_j\}_{j=1}^n$, imaging grid $\mathcal{G}$.}
\KwOut{Reconstructed boundary shape $\partial D$ and impedance function $\gamma$ on $\partial D$.}
\BlankLine
\textbf{(1) Qualitative shape reconstruction via the direct sampling method}\;
For grid point $z \in \mathcal{G}$, compute the total indicator $I_{\text{total}}(z)$ via \eqref{eq-total-indicator}\;
\BlankLine
\textbf{(2) Quantitative boundary refinement via shape optimization}\;
Extract boundary reference points $\{p_i\}_{i=1}^{N_r}$ using the thresholding rule defined in \eqref{eq-threshold}\;
Optimize the Fourier coefficients (or spherical expansion coefficients) by minimizing the regularized objective function $L$ in \eqref{eq-objective}\;
\BlankLine
\textbf{(3) Decoupled reconstruction of the boundary conditions}\;
\For{each source position $x_i$, $i = 1, \ldots, N_s$}{
    Find the closest boundary point $y_i^+ = \arg\min_{y \in \partial D} \|x_i - y\|$ on the optimized shape\;
    Compute the impedance ratio $q_i$ using the practical discrete sum in \eqref{eq-ratio-practical}\;
    Recover the discrete impedance value $\gamma(y_i^+)$ via \eqref{eq-recover-impedance}\;
}
Approximate the continuous global impedance function $\gamma$ by fitting the discrete pointwise values $\{\gamma(y_i^+)\}_{i=1}^{N_s}$ using a truncated Legendre polynomial series\;
\end{algorithm}

\subsection{Qualitative shape reconstruction via the direct sampling method}

Based on the asymptotic formula \eqref{eq-Asym-u0-backsca}, we define a direct sampling indicator function for each source position $x$ and imaging point $z$:
\be \label{eq-indicator}
I_x(z) = \left|\sum_{j=1}^{n} k_j^{1/2} e^{-2ik_j\|z-x\|} u^s(x;x,k_j)\right|,
\en
where $\{k_j\}_{j=1}^n$ are the discrete wavenumbers used in the measurements. The indicator function $I_x(z)$ is expected to attain a local maximum when $z$ lies on $\partial D$, specifically near the closest point $y^+$ to $x$. 

For multiple source positions $\{x_i\}_{i=1}^{N_s}$, we compute the indicator function for each source individually and then aggregate them to form a global indicator:
\be \label{eq-total-indicator}
I_{\text{total}}(z) = \sum_{i=1}^{N_s} \frac{I_{x_i}(z)}{\max_z I_{x_i}(z)}.
\en
This local normalization ensures each source contributes equally to the total indicator function by eliminating amplitude variations caused by the unknown impedance function $\gamma$.

\subsection{Quantitative boundary refinement via shape optimization}

We first extract candidate boundary points from the continuous indicator function landscape via thresholding. Specifically, we collect all grid points $z$ satisfying
\be \label{eq-threshold}
I_{\text{total}}(z) > \rho \cdot \max_z I_{\text{total}}(z),
\en
where $\rho \in (0,1)$ is a threshold parameter. These points serve as the geometric reference points $\{p_i\}_{i=1}^{N_r}$ for the subsequent shape refinement.
To ensure a smooth and stable reconstruction, the boundary $\partial D$ is parameterized via a truncated Fourier series in polar coordinates (or truncated spherical expansion in three dimension):
\be \label{eq-fourier-param}
r(\theta) = a_0 + \sum_{m=1}^{M} \left[a_m \cos(m\theta) + b_m \sin(m\theta)\right], \quad \theta \in [0, 2\pi),
\en
where $M$ denotes the truncation order. The shape optimization is then formulated as finding the optimal Fourier coefficients $\{a_m, b_m\}_{m=0}^M$ that minimize the discrepancy between the parameterized curve and the extracted reference points. The objective function is defined as:
\be \label{eq-objective}
L(\{a_m, b_m\}) = \frac{1}{N_r}\sum_{i=1}^{N_r} w_i d^2(p_i, \partial D), 
\en
where $d(p_i, \partial D)$ is the Euclidean distance from the reference point $p_i$ to the boundary  $\partial D$. 

The asymmetric spatial weights ($w_i$) in the objective function \eqref{eq-objective} are explicitly designed to ensure improvement of the accuracy and physical viability of the reconstructed shape. For a convex obstacle, the indicator function $I_x(z)$ is expected to be larger in the exterior of the obstacle than in the interior because two highlighting tangent lines will always intersect at the exterior of the obstacle. Consequently, a significantly larger penalty weight $w_i$ is assigned to interior points compared to exterior points. 

The optimization problem is solved iteratively via gradient descent.
The initial guess for the optimization is naturally  a simple circle with radius $a_0 = \frac{1}{N_r}\sum_{i=1}^{N_r} |p_i|$ and all higher-order coefficients initialized to zero. 

\subsection{Decoupled reconstruction of the boundary conditions}

Once the boundary geometry is precisely determined and fixed, the problem of recovering the physical properties (i.e., the impedance function $\gamma$) becomes a decoupled, well-posed procedure. For each source position $x_i$, we identify the closest point $y_i^+$ on the optimized boundary $\partial D$.

Based on the high-frequency near-field asymptotic expansion \eqref{eq-Asym-u0-backsca}, the scattered field behaves as:
\be \label{eq-impedance-formula}
u^s(x_i, x_i, k) \approx -\frac{e^{\frac{3-N}{4}\pi i}}{2k} \left( \frac{k}{2\pi\|x_i-y_i^+\|^2|\det H_{\partial D}\psi(y^+)|} \right)^{\frac{N-1}{2}} \frac{\gamma(y_i^+)-1}{\gamma(y_i^+)+1}e^{2ik\|x_i-y_i^+\|}.
\en

By isolating the impedance-dependent term, we define the impedance ratio $q_i = \frac{\gamma(y_i^+)-1}{\gamma(y_i^+)+1}$. Theoretically, $q_i$ can be extracted by taking the high-frequency limit:
\be \label{eq-extract-ratio}
q_i = \lim_{j\to\infty} k_j^{\frac{3-N}{2}} e^{\frac{3-N}{4}\pi i - 2ik_j\|x_i-y_i^+\|} u^s(x_i, x_i, k_j) \cdot C_i^{-1},
\en
where $C_i = -\frac{e^{\frac{3-N}{4}\pi i}}{2} \left( \frac{1}{2\pi\|x_i-y_i^+\|^2|\det H_{\partial D}\psi(y^+)|} \right)^{\frac{N-1}{2}}$ is a purely geometric factor that is now completely known from the previous step.

In numerical practice, since the available wavenumbers are discrete and finite, we approximate the limit by averaging over the multi-frequency measurements to suppress noise:
\be \label{eq-ratio-practical}
q_i \approx \frac{1}{n}\sum_{j=1}^{n} k_j^{\frac{3-N}{2}} e^{\frac{3-N}{4}\pi i - 2ik_j\|x_i-y_i^+\|} u^s(x_i, x_i, k_j) \cdot C_i^{-1}.
\en
Then the boundary condition is determined by the impedance ratio $q_i$:
\begin{itemize}
    \item If $q_i \approx 1$, then $\gamma(y_i^+) \to \infty$, corresponding to a \textbf{Dirichlet} boundary condition;
    \item If $q_i \approx -1$, then $\gamma(y_i^+) \to 0$, corresponding to a \textbf{Neumann} boundary condition;
    \item For intermediate values ($-1<q_i<1$), $\gamma(y_i^+)$ accurately yields the varying parameter of a \textbf{Robin} (impedance) boundary condition.
\end{itemize}

The point-wise impedance value is explicitly recovered via algebraic inversion:
\be \label{eq-recover-impedance}
\gamma(y_i^+) = \frac{1+q_i}{1-q_i}.
\en
Finally, the continuous global impedance function $\gamma$ is approximated by fitting the discrete pointwise values $\{\gamma(y_i^+)\}_{i=1}^{N_s}$ using a truncated Legendre polynomial series in the $L^2$-sense.

\section{Numerical examples}\label{NumExamples}

In this section, we present comprehensive numerical examples to demonstrate the effectiveness, robustness, and decoupled nature of the proposed three-step algorithm. All examples are computed in two dimensions ($N=2$). 

The scattered field data are generated by solving the forward problem using the Nyström method \cite{Kress95} with boundary integral equations. We simulate the near-field backscattering measurements $u^s(x_i, x_i, k_j)$ using $N_s = 32$ uniformly distributed point sources located on a measurement circle of radius $R=5$. The multi-frequency data are collected in the wavenumber range $k \in [10, 30]$ with a uniform step size $\delta_k = 0.2$.

To rigorously validate the robustness of the algorithm against measurement noise, we corrupt the scattered field data with $10\%$ relative complex Gaussian noise. Specifically, each measurement $u^s(x_i, x_i, k_j)$ is perturbed as follows:
\ben
u^\delta(x_i, x_i, k_j) = u^s(x_i, x_i, k_j) + 0.10 \frac{\xi_{i,j} + \mathrm{i}\zeta_{i,j}}{\sqrt{2}} |u^s(x_i, x_i, k_j)|,
\enn
where $\xi_{i,j}$ and $\zeta_{i,j}$ are independent standard normal random variables.

We test the algorithm on a non-trivial convex obstacle, specifically an ``egg-shaped'' domain parameterized as
\be \label{eq:eggShape}
\partial D = \left\{ \left( 1.5\cos(t), \frac{\sin(t)}{1+0.2\cos(t)} \right) |\, t \in [0, 2\pi) \right\}.
\en
To verify the decoupling capability of the algorithm, we consider four distinct physical boundary conditions:
(1) Dirichlet boundary condition;
(2) Neumann boundary condition;
(3) Constant Robin boundary condition with $\gamma(t) \equiv 0.5$;
(4) Variable Robin boundary condition with $\gamma(t) = 3 + \sin(t)$.
In the shape optimization step (Step 2), we set the threshold parameter to $\rho = 0.6$ and the asymmetric spatial weight to $w_i = 100$ for the interior points and $w_i = 1$ for the exterior points.

\subsection{Robust shape reconstruction: Step 1\&2}

We first evaluate the geometric reconstruction phase, which includes Step 1 (Shape Initialization via Direct Sampling) and Step 2 (Shape Optimization). The goal of these steps is to extract the obstacle's boundary without any prior knowledge of its physical boundary conditions.

Figure~\ref{fig:shape_reconstruction} illustrates the sequential geometric reconstruction procedure for all four boundary conditions. Despite the $10\%$ measurement noise, the indicator function robustly highlights the boundary region. The asymmetric distribution of the reference points stems from the weighted objective function \eqref{eq-objective}. Finally, as clearly shown in Figure~\ref{fig:shape_reconstruction}, the reconstructed shapes (dashed blue lines) are in excellent agreement with the true boundaries (solid red lines) across all different boundary conditions. This confirms that our geometric reconstruction steps are remarkably independent of the underlying physical parameters.

\begin{figure}[htbp]
\centering
\includegraphics[width=0.22\textwidth]{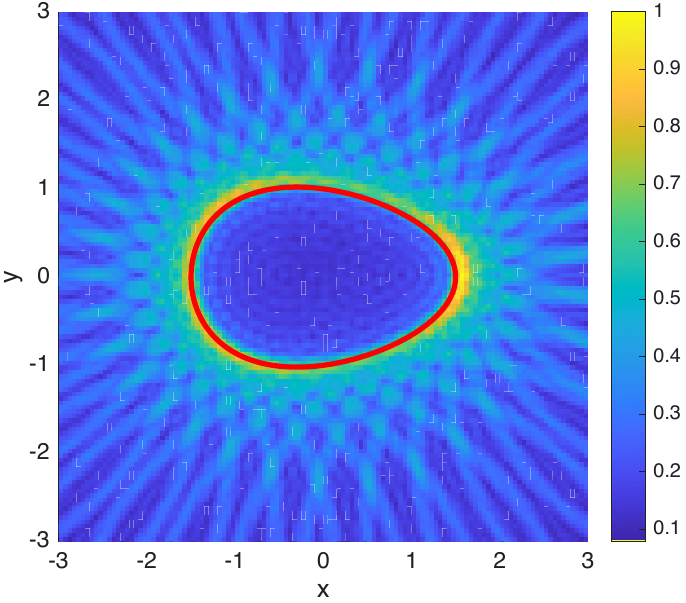}
\includegraphics[width=0.22\textwidth]{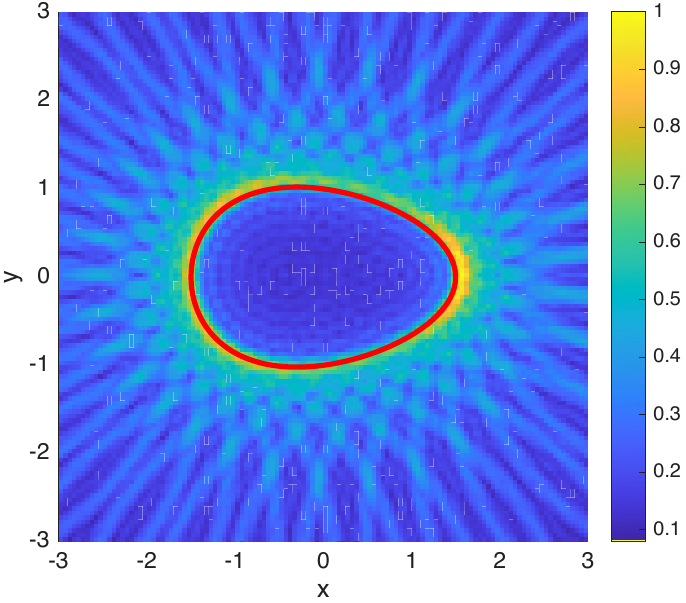}
\includegraphics[width=0.22\textwidth]{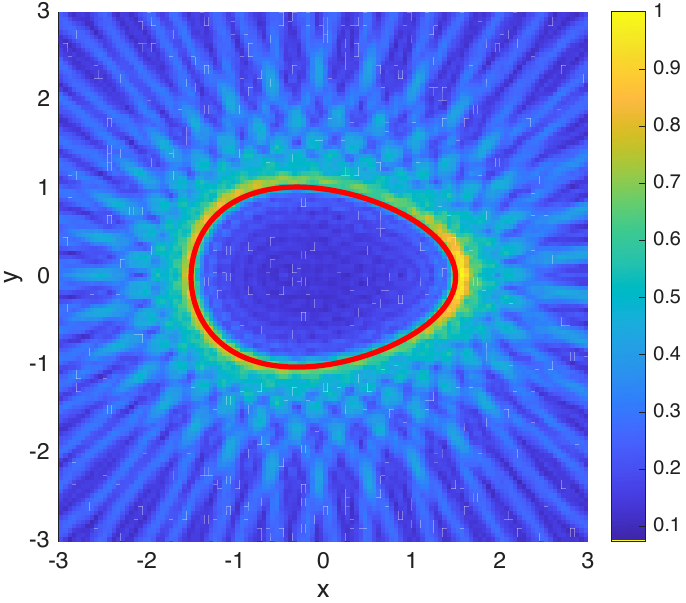}
\includegraphics[width=0.22\textwidth]{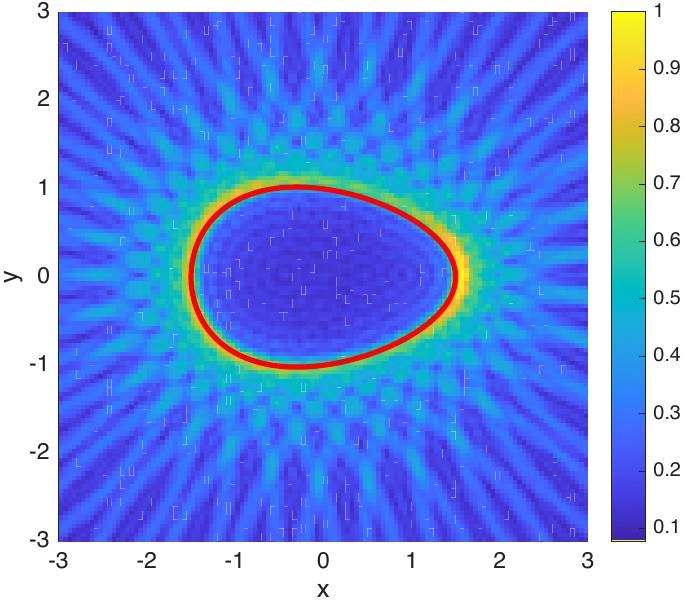}
\includegraphics[width=0.22\textwidth]{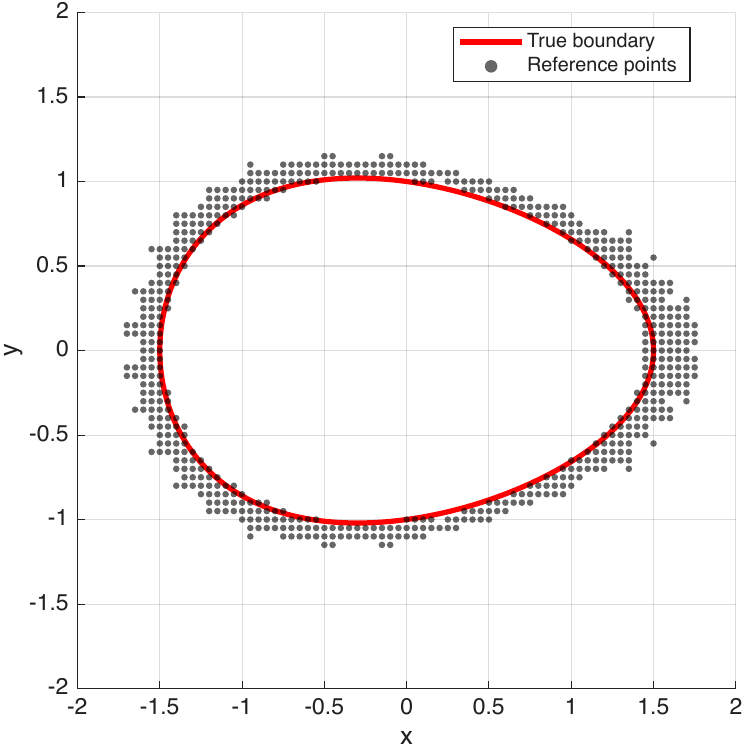}
\includegraphics[width=0.22\textwidth]{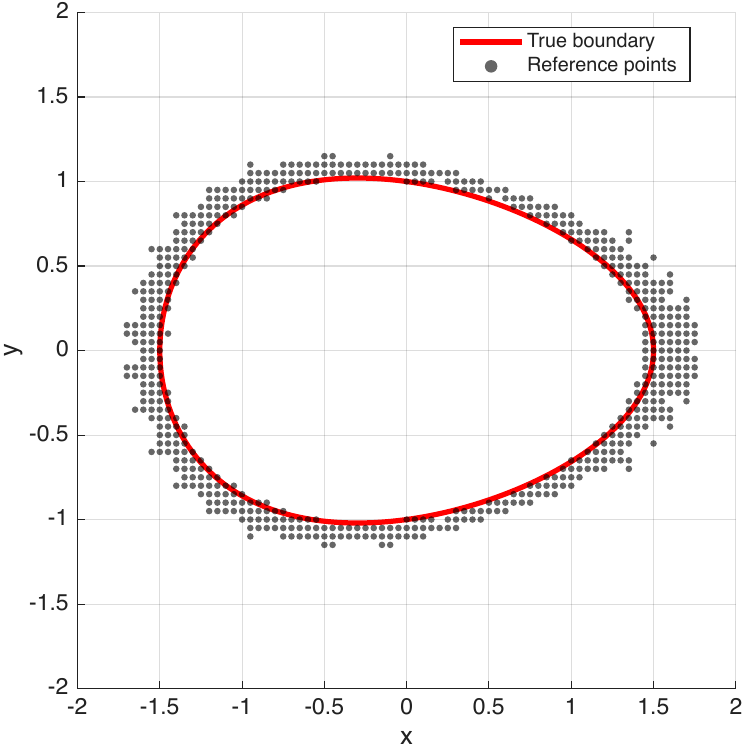}
\includegraphics[width=0.22\textwidth]{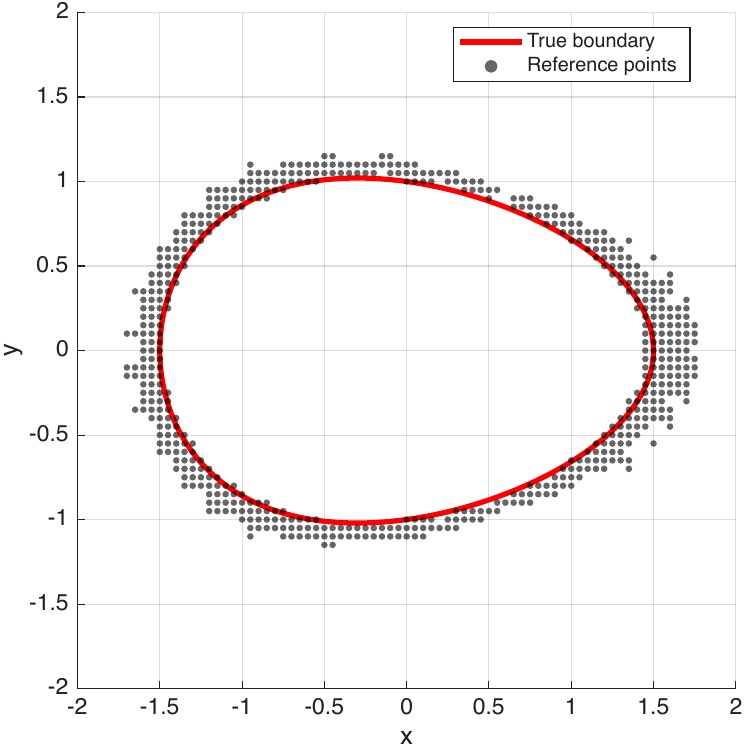}
\includegraphics[width=0.22\textwidth]{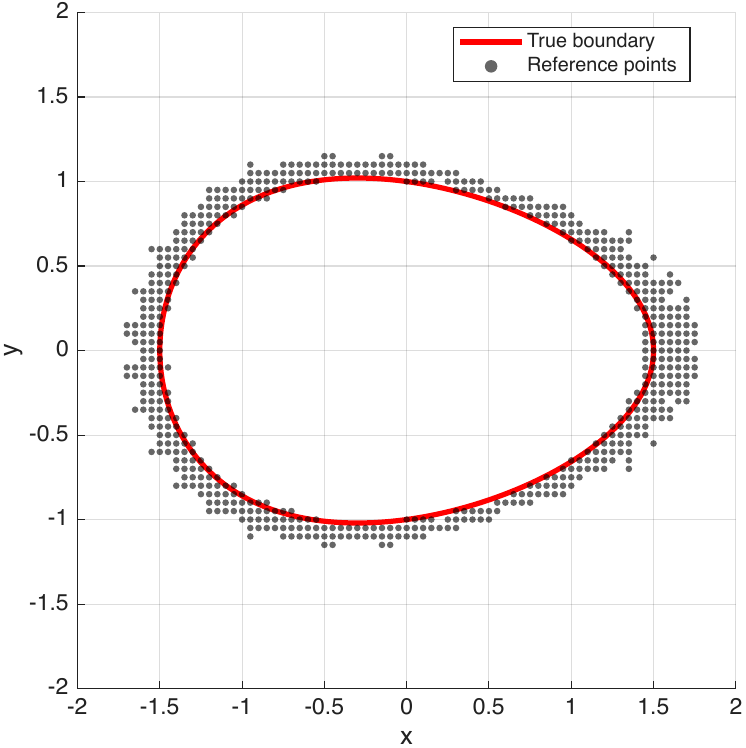}
\includegraphics[width=0.22\textwidth]{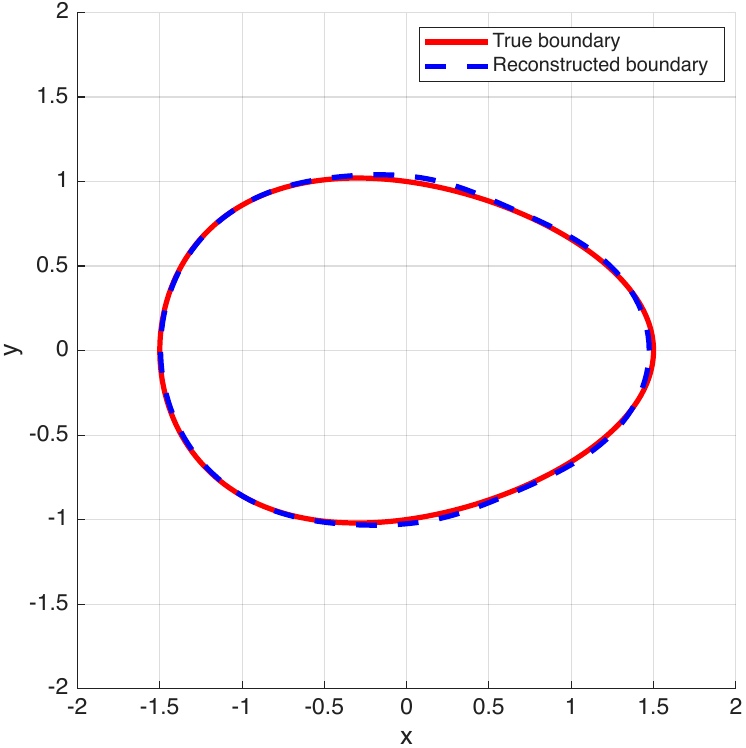}
\includegraphics[width=0.22\textwidth]{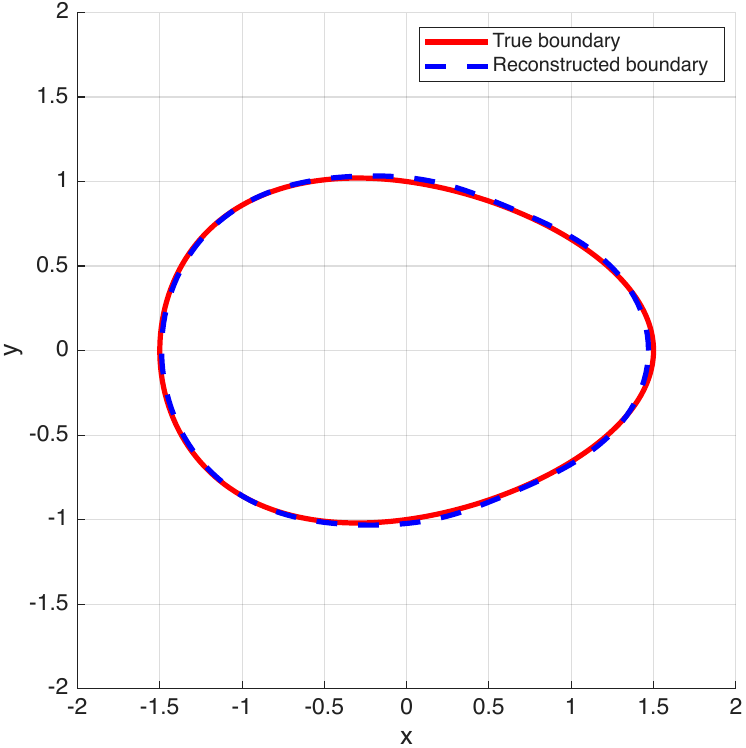}
\includegraphics[width=0.22\textwidth]{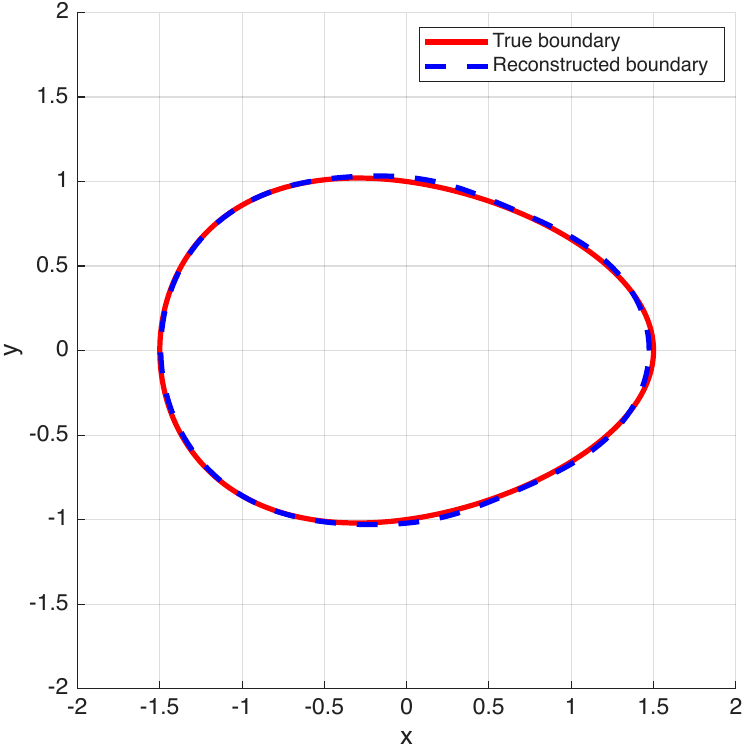}
\includegraphics[width=0.22\textwidth]{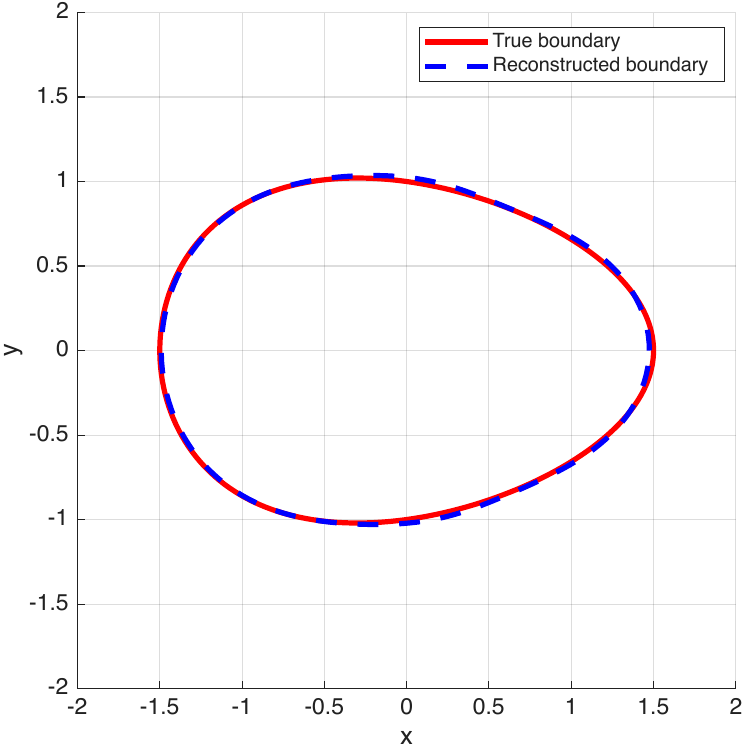}
\caption{Reconstruction of the shape. Left to right, columns correspond to the Dirichlet, Neumann, constant Robin, and variable Robin boundary conditions, respectively; top to bottom, rows represent the direct sampling indicator, extracted reference points, and the final optimized boundary, respectively; The true boundary is plotted as a solid red line.}
\label{fig:shape_reconstruction}
\end{figure}

\begin{figure}[htbp]
\centering
\includegraphics[width=0.22\textwidth]{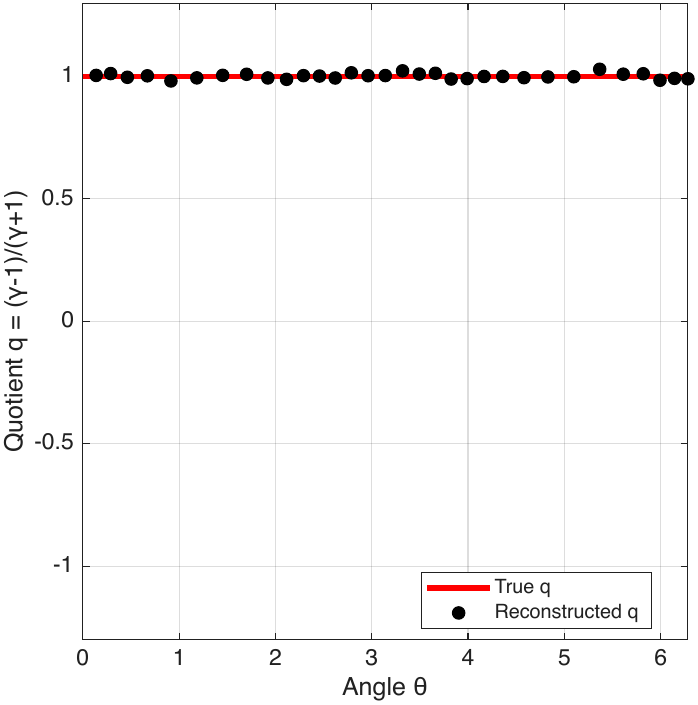}
\includegraphics[width=0.22\textwidth]{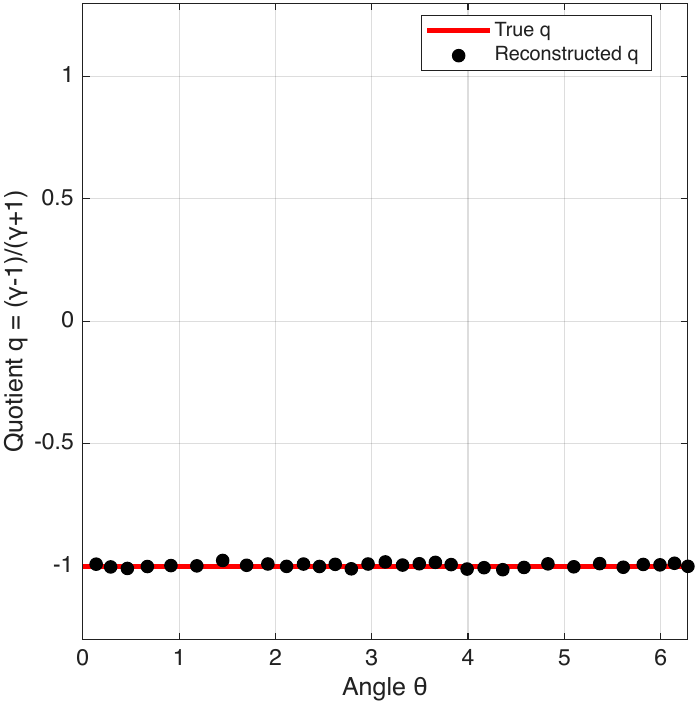}
\includegraphics[width=0.22\textwidth]{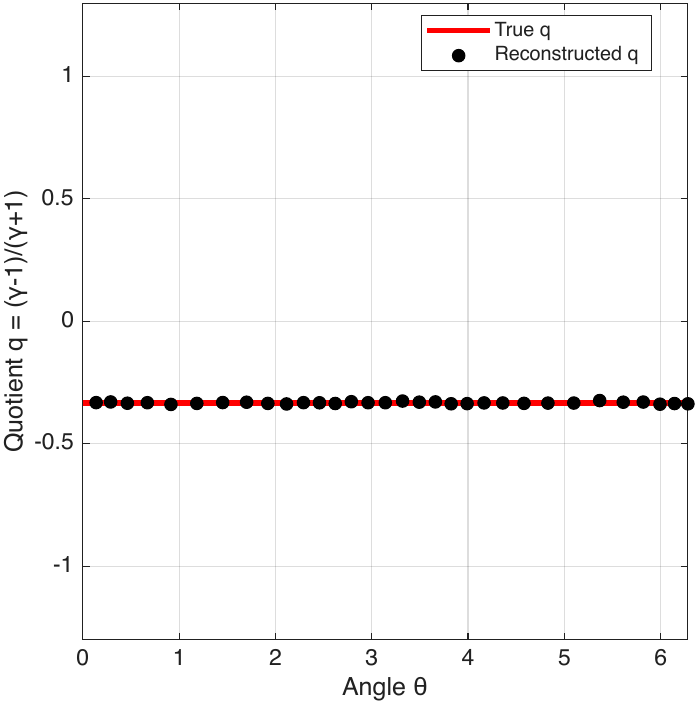}
\includegraphics[width=0.22\textwidth]{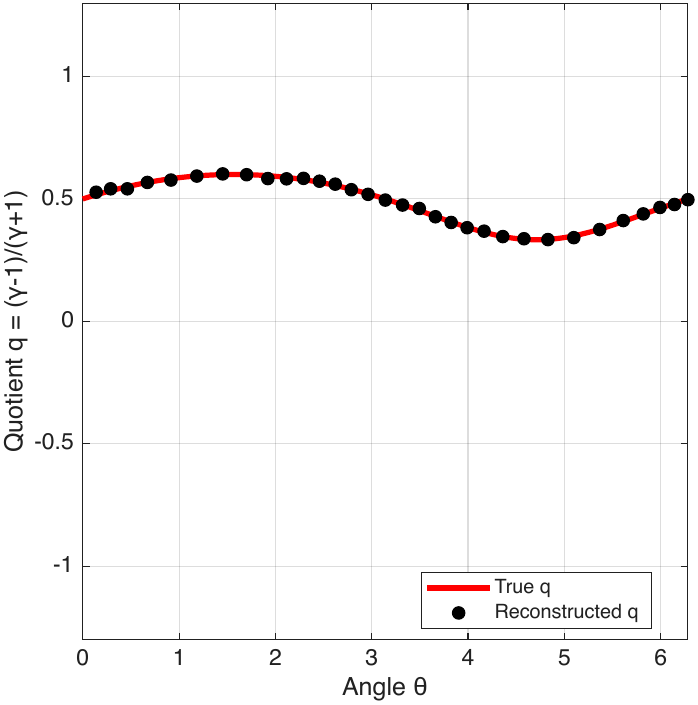}
\includegraphics[width=0.22\textwidth]{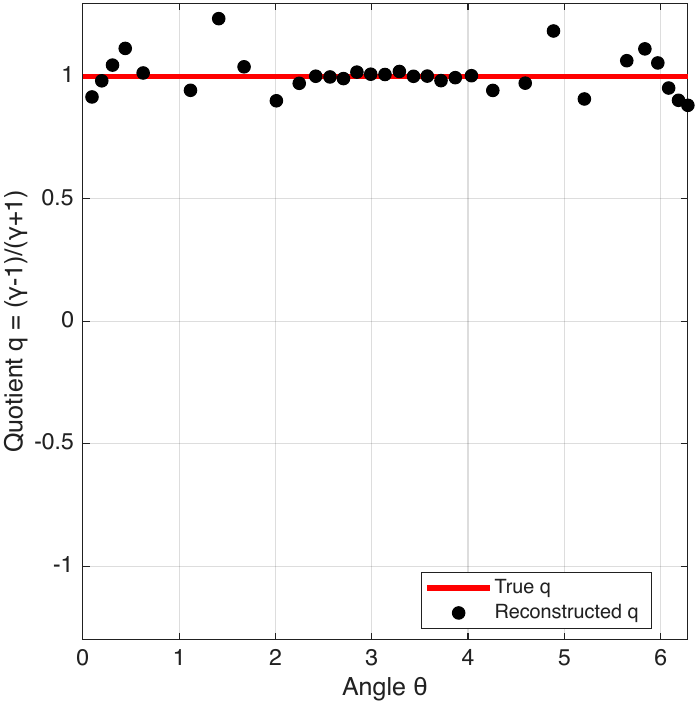}
\includegraphics[width=0.22\textwidth]{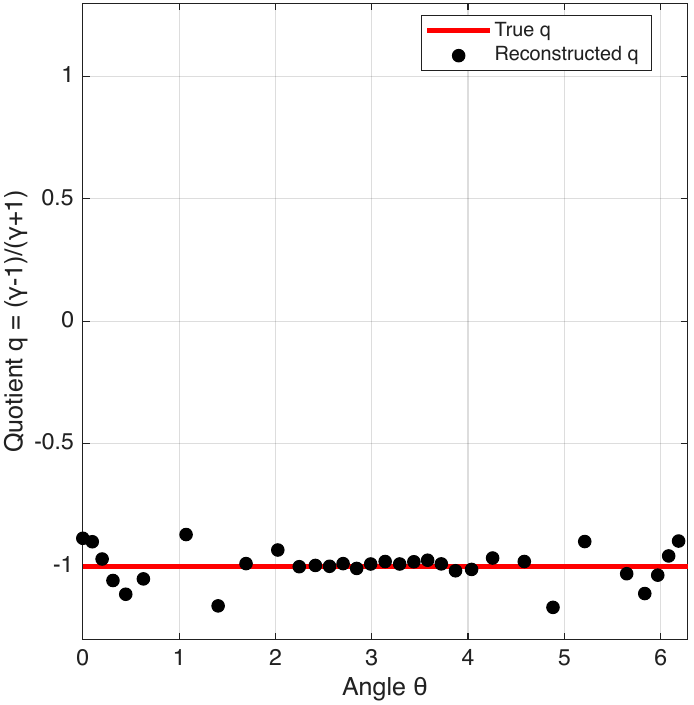}
\includegraphics[width=0.22\textwidth]{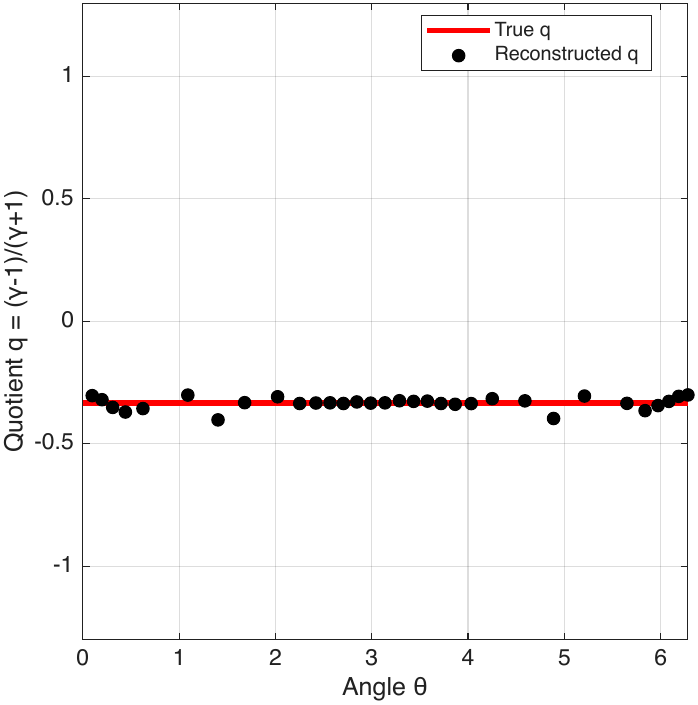}
\includegraphics[width=0.22\textwidth]{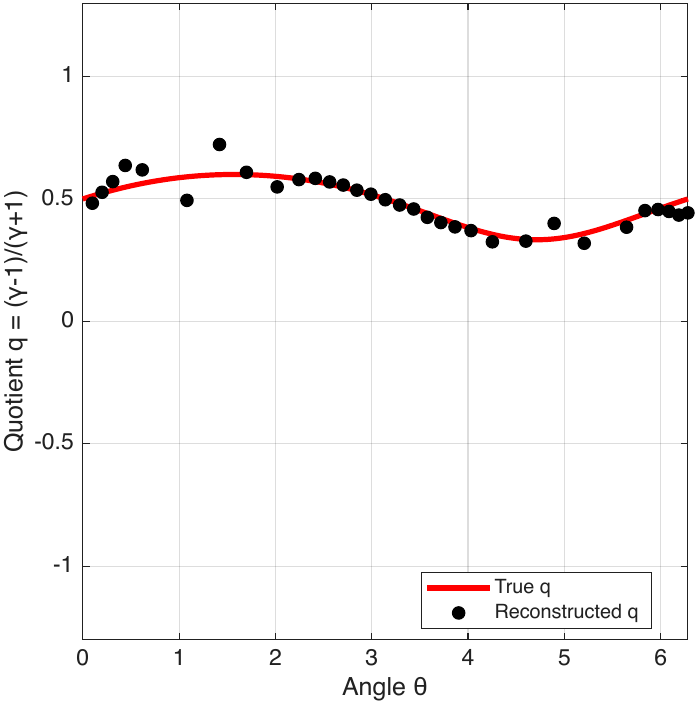}
\caption{Reconstruction of the quotient $q = (\gamma-1)/(\gamma+1)$. Top row: $q$ extracted using the exact ideal boundary. Bottom row: $q$ extracted using the optimized iterative boundary. Columns (left to right): Dirichlet ($q \equiv 1$), Neumann ($q \equiv -1$), constant Robin, and variable Robin. The algorithm reliably identifies the boundary types.}
\label{fig:quotient_reconstruction}
\end{figure}
\subsection{Decoupled boundary condition reconstruction: Step 3}

With the optimized boundary geometry obtained, we proceed to recover the boundary conditions. The reconstruction heavily relies on the boundary shape $\partial D$.
The reconstruction of the quotient $q$ is shown in Figure~\ref{fig:quotient_reconstruction}. The first row presents results using the true boundary, and the second row presents results using the optimized boundary from the previous step. 
When using the true boundary (an idealized scenario), the recovered $q$ values strictly align with the theoretical values: $q \approx 1$ for Dirichlet and $q \approx -1$ for Neumann. Notably, when using the optimized boundary, the recovered $q$ values (black dots) still exhibit excellent agreement with the theoretical ground truth (red lines). The geometric errors introduced in Step 2 only cause negligible fluctuations in $q$, demonstrating that our shape-impedance decoupling approach successfully mitigates the degradation of physical parameters due to small geometric deviations.

For the two Robin cases (where $-1 < q < 1$), the discrete impedance points are explicitly recovered via algebraic inversion.  As depicted in Figure~\ref{fig:impedance_reconstruction}, to suppress point-wise variance, we use the Legendre polynomial approximation (truncated up to degree 5) to fit these recovered points (black dots). The resulting continuous impedance curves (blue dashed lines) accurately capture both the constant magnitude ($\gamma = 0.5$) and the periodic oscillation ($\gamma(t) = 3 + \sin(t)$) of the target functions.

\begin{figure}[htbp]
\centering
\includegraphics[width=0.22\textwidth]{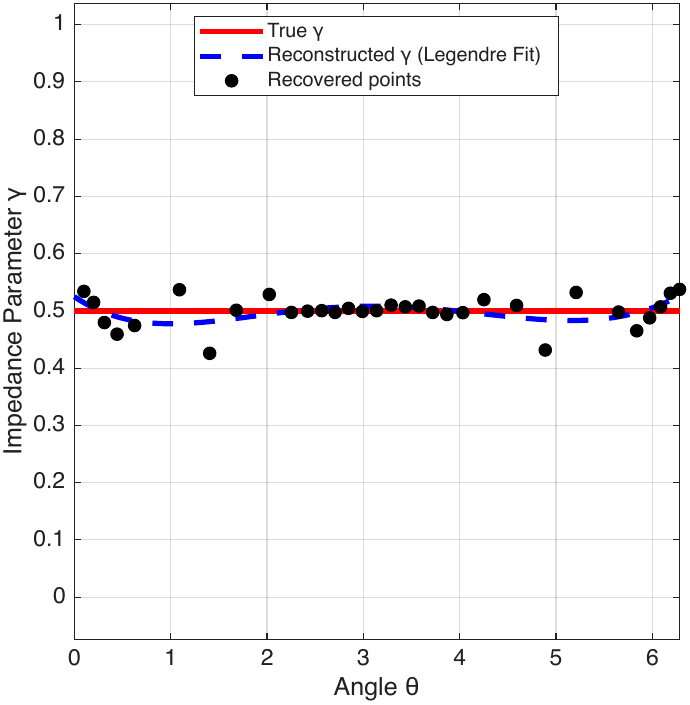}
\includegraphics[width=0.22\textwidth]{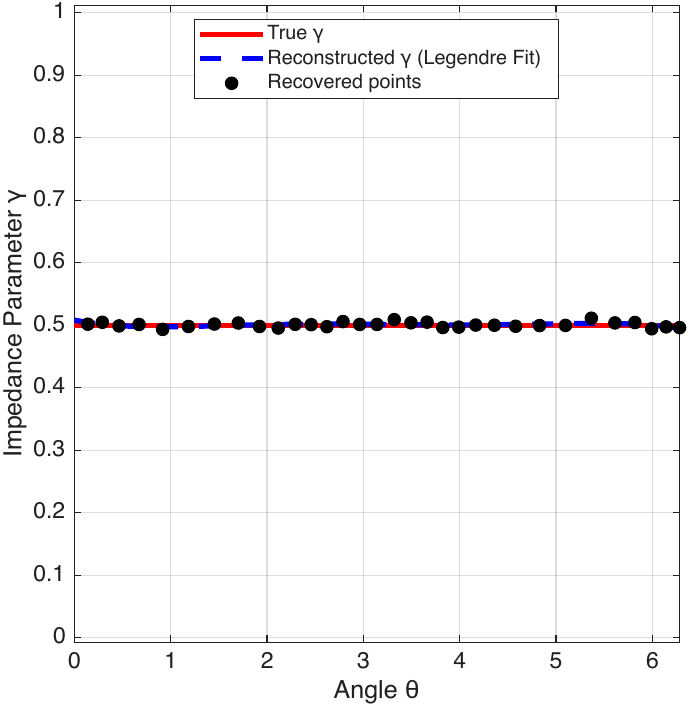}
\includegraphics[width=0.22\textwidth]{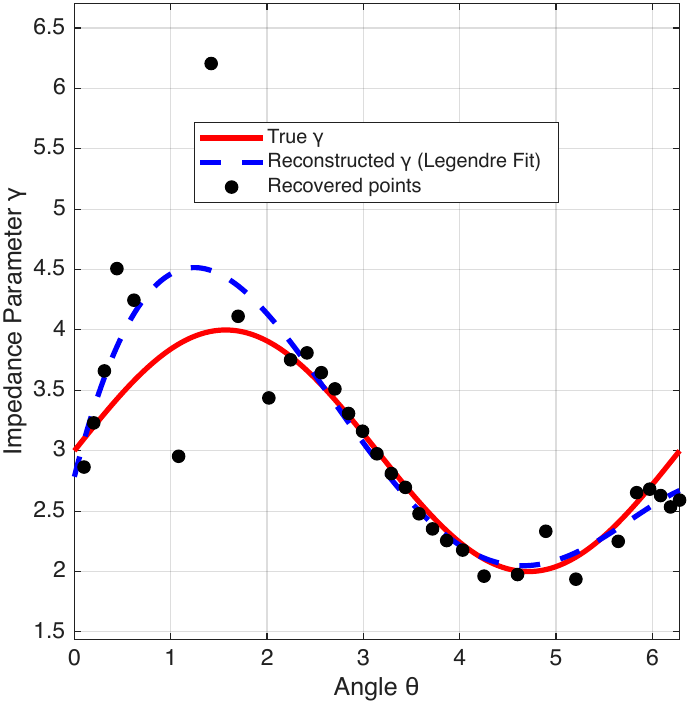}
\includegraphics[width=0.22\textwidth]{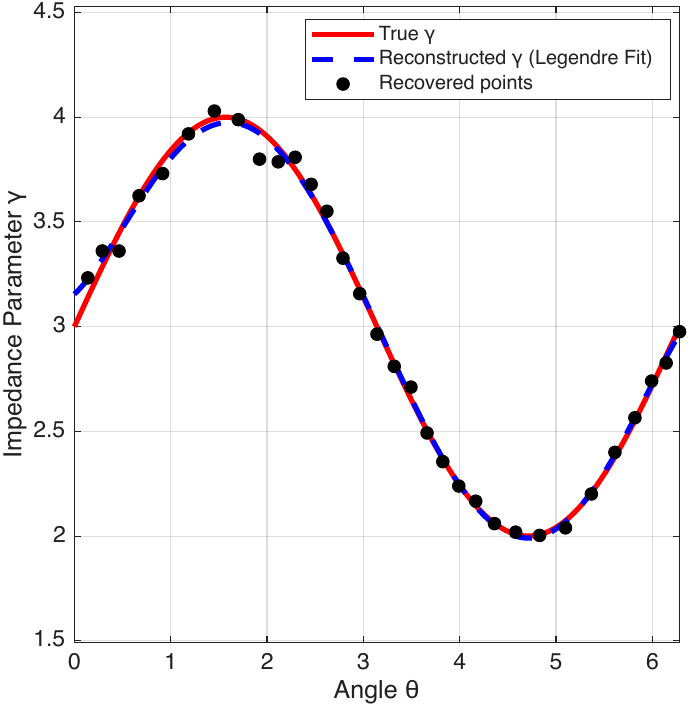}
\caption{Reconstruction of the impedance function $\gamma$. From left to right: Constant impedance $\gamma \equiv 0.5$ with iterated boundary, constant impedance with true boundary, variable impedance $\gamma(t) = 3 + \sin(t)$ with iterated boundary, and variable impedance with true boundary, respectively.} 
\label{fig:impedance_reconstruction}
\end{figure}

Finally, we emphasize that our algorithm entirely avoids solving any forward scattering problems, rendering the proposed method highly efficient and fast to implement. Furthermore, the reconstruction quality can be further improved by employing iterative refinement techniques as described in \cite{KressRundell2018}. We plan to investigate this extension in a future work.

\section*{Acknowledgement}
The research of X. Liu is supported by the National Key R\&D Program of China under grant 2023YFA1009300  and the NNSF of China under grant 12371430.

 
\bibliographystyle{SIAM}

\end{document}